\documentclass[11pt]{article}
\usepackage[T1]{fontenc}
\usepackage{lmodern}
\usepackage[margin=1in]{geometry}
\usepackage{amsmath,amssymb,amsthm,mathtools,microtype}
\usepackage[hidelinks]{hyperref}

\newtheorem{theorem}{Theorem}[section]
\newtheorem{lemma}[theorem]{Lemma}
\newtheorem{proposition}[theorem]{Proposition}

\theoremstyle{remark}
\newtheorem{remark}[theorem]{Remark}

\numberwithin{equation}{section}

\newcommand{\R}{\mathbb R}
\newcommand{\Q}{\mathbb Q}
\newcommand{\Z}{\mathbb Z}
\newcommand{\N}{\mathbb N}
\newcommand{\Sing}{\operatorname{Sing}}
\newcommand{\DI}{\operatorname{DI}}
\newcommand{\covol}{\operatorname{covol}}
\newcommand{\vol}{\operatorname{vol}}
\newcommand{\spanR}{\operatorname{span}_{\R}}
\newcommand{\wt}{\mathbf w}
\newcommand{\eps}{\varepsilon}
\newcommand{\sstar}{s_*}

\title{Hausdorff Dimension of Weighted Singular Vectors}
\author{Bohan Yang\textsuperscript{*} \and Tianru Zhu\textsuperscript{\#}}
\date{}

\hypersetup{
  pdftitle={Hausdorff Dimension of Weighted Singular Vectors},
  pdfauthor={},
  pdfsubject={Weighted singular vectors and Dirichlet improvement}
}

\begin{document}
\maketitle
\begin{NoHyper}
\begingroup
\renewcommand{\thefootnote}{}
\footnotetext{ Email: bhyang@simis.cn (B. Yang); ztr24@mails.tsinghua.edu.cn (T. Zhu).}
\addtocounter{footnote}{-1}
\endgroup
\end{NoHyper}
\begin{abstract}
Let $d\ge2$ and let
$\wt=(w_1,\ldots,w_d)$ satisfy
$w_1\ge\cdots\ge w_d>0$ and $\sum_i w_i=1$.
Set $\sstar=d-(1+w_1)^{-1}$. We prove that there exist constants
$C_{d,\wt}>0$ and $\eps_0=\eps_0(d,\wt)>0$ such that for all $0<\eps<\eps_0$,
$$
 \dim_H\DI_{\wt}(\eps)\le \sstar+C_{d,\wt}\sqrt\eps.
$$
Together with the lower bound of Kim--Park,
this gives the exact formula
$$
 \dim_H\Sing(\wt)=\sstar.
$$
\end{abstract}

\section{Introduction}\label{sec:intro}

\subsection{Background and main results}\label{subsec:background}
Fix an integer $d\ge2$ and an ordered positive weight vector
$$
 \wt=(w_1,\ldots,w_d),\qquad
 w_1\ge\cdots\ge w_d>0,\qquad
 \sum_{i=1}^d w_i=1.
$$
For $y=(y_1,\ldots,y_d)\in\R^d$, write
$$
 \|y\|_{\wt}=\max_{1\le i\le d}|y_i|^{1/w_i}.
$$
For $\eps>0$, let $\DI_{\wt}(\eps)$ be the set of $x\in\R^d$ such that,
for every sufficiently large $T$, there exists a pair $(p,q)\in\Z^d\times\N$ with
$$
 0<q<T,\qquad
 \|qx-p\|_{\wt}<\frac{\eps}{T}.
$$
A vector $x\in\R^d$ is called $\wt$-\emph{singular} if
$x\in\DI_{\wt}(\eps)$ for every $\eps>0$.  Denote the set of such
vectors by $\Sing(\wt)$.  Thus
\begin{equation}\label{eq:SingDI}
 \Sing(\wt)=\bigcap_{\eps>0}\DI_{\wt}(\eps).
\end{equation}
All Hausdorff dimensions are taken with respect to the Euclidean metric.

Khintchine \cite{Khi26} introduces singular vectors, while Dani
\cite{Dani85} develops the correspondence between
singular systems and divergent diagonal trajectories on spaces of lattices.
In the present setting, with
$$
 a_t=\operatorname{diag}(e^{w_1t},\ldots,e^{w_dt},e^{-t}),
 \qquad
 h(x)=\begin{pmatrix}I_d&x\\0&1\end{pmatrix},
$$
one has $x\in\Sing(\wt)$ exactly when the trajectory
$\{a_th(x)\Z^{d+1}\}_{t \ge 0}$ is divergent.  In the unweighted case, Cheung
\cite[Theorem~1.1]{Che11} shows that
$$
 \dim_H\Sing\left(\tfrac12,\tfrac12\right)=\frac43.
$$
Cheung and Chevallier \cite[Theorem~1.1]{CC16} determine, for every
$d\ge2$,
$$
 \dim_H\Sing\left(\tfrac1d,\ldots,\tfrac1d\right)
 =\frac{d^2}{d+1}.
$$
They also establish a quantitative upper bound for Dirichlet-improvable
vectors.  In the normalization used here, their estimate has error
$O_d(\sqrt\eps)$ in the unweighted case. Kadyrov, Kleinbock, Lindenstrauss and Margulis \cite{KKLM17}
extend the Hausdorff dimension upper bound of Cheung and Chevallier
to general unweighted singular systems of linear forms, as well as
systems that are singular on average.
Das, Fishman, Simmons and Urba\'nski \cite{DFSU24} obtain
matching lower bounds for general singular matrices via a variational
principle in the parametric geometry of numbers.
Guan and Shi \cite{GS20} establish positive Hausdorff codimension
for divergent-on-average points under arbitrary one-parameter
subgroup actions on finite-volume homogeneous spaces,
thereby proving a conjecture of Cheung \cite{Che11}.

For weighted vectors in $\R^2$, Liao, Shi, Solan and Tamam
\cite[Theorems~1.1 and~1.3]{LSST} develop the weighted best-approximation
method and establish
$$
 \dim_H\Sing(w_1,w_2)=2-\frac1{1+w_1}.
$$
They also obtain the quantitative upper estimate with an
$O_{\wt}(\sqrt\eps)$ error term.
Kim and Park \cite[Theorem~1.1]{KP} establish in arbitrary dimension the
lower bound
\begin{equation}\label{eq:KP}
 \dim_H\Sing(\wt)\ge d-\frac1{1+w_1}.
\end{equation}
Aggarwal and Ghosh \cite{AG26} obtain packing dimension upper
bounds for weighted singular matrices.
Kleinbock, Moshchevitin, Warren and Weiss \cite{KMWW}
construct totally irrational matrices satisfying prescribed uniform
approximation bounds simultaneously for multiple weights.
Yang \cite{Yang26} studies slowly divergent trajectories for weighted
singular vectors in the plane.

Set
$$
 \sstar=d-\frac1{1+w_1}.
$$
This value is predicted in Liao--Shi--Solan--Tamam \cite{LSST}.
The denominator $1+w_1$ is the largest Lyapunov exponent
for the conjugation action of $a_t$ on
$\{h(x):x\in\R^d\}$.

\begin{theorem}\label{thm:singular}
Let $d\ge2$ and let $\wt=(w_1,\ldots,w_d)$ satisfy
$$
 w_1\ge\cdots\ge w_d>0,
 \qquad
 \sum_{i=1}^d w_i=1.
$$
Then
$$
 \dim_H\Sing(\wt)=\sstar.
$$
\end{theorem}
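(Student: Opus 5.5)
The lower bound $\dim_H\Sing(\wt)\ge\sstar$ is exactly the Kim--Park estimate \eqref{eq:KP}, so only the upper bound needs proof. By \eqref{eq:SingDI}, $\Sing(\wt)\subset\DI_{\wt}(\eps)$ for every $\eps>0$. Hence it suffices to prove the quantitative estimate announced in the abstract,
$$
 \dim_H\DI_{\wt}(\eps)\le \sstar+C_{d,\wt}\sqrt\eps,\qquad 0<\eps<\eps_0,
$$
and then let $\eps\to0$. My plan is to generalize the weighted best-approximation method of Liao--Shi--Solan--Tamam from $d=2$ to arbitrary $d$, in the spirit of Cheung--Chevallier's self-similar covering argument.

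\emph{Coding by best approximations.} For $x\in\DI_{\wt}(\eps)$, consider the sequence of $\wt$-best approximations $(p_k,q_k)$, meaning minima of $\|qx-p\|_{\wt}$ among $0<q\le q_k$. Dirichlet improvability gives $\|q_kx-p_k\|_{\wt}<\eps/q_{k+1}$ for large $k$. As in Cheung--Chevallier, I would group consecutive best approximations into blocks whose integer vectors span a rational subspace of small covolume. Each block records the successive minima of the lattice $a_th(x)\Z^{d+1}$ at suitable times; for singular $x$ the first minimum tends to $0$. The goal is a tree (Cantor-type) covering of $\DI_{\wt}(\eps)\cap[0,1]^d$. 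Each node is a rational datum (a vector or a flag of rational subspaces with controlled heights), and the associated cylinder is the set of $x$ whose trajectory passes close to it. The covering sets are the weighted boxes $\{x:\|q x-p\|_{\wt}<\delta\}$ around rational points $p/q$. These are rectangles with side lengths $(\delta^{w_i}/q)_i$.

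\emph{Counting and the exponent.} For a parent cylinder of size governed by $(p,q)$ and a child $(p',q')$ with $q'\approx e^{t}q$, I would count children by a lattice-point count in the projected lattice. I would then estimate their Hausdorff $s$-content after covering each very eccentric rectangle by cubes of side equal to its shortest side. The eccentric geometry is where $w_1$ enters. A rectangle with sides $\delta^{w_i}/q$ covered optimally at scale $\delta^{w_1}/q$ (the shortest side, since $\delta<1$) costs an exponent loss that produces the term $1/(1+w_1)$. Concretely, I would show that for $s>\sstar+C\sqrt\eps$,
$$
 \sum_{\text{children}}\operatorname{diam}(\text{child})^s\le \tfrac12\operatorname{diam}(\text{parent})^s .
$$
The $\sqrt\eps$ arises as in Cheung--Chevallier: the admissible time between consecutive steps is about $\log(1/\eps)$. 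One balances the counting gain against a factor $\eps^{-c}$ by choosing the block length of order $\eps^{-1/2}$. Iterating this inequality along the tree yields the dimension bound via the standard mass-distribution/covering lemma.

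\emph{Main obstacle.} The hardest step is the counting lemma in dimension $d\ge3$ with unequal weights. There the best approximations may concentrate on proper rational subspaces of various dimensions, and one must control children lying in a degenerate sublattice. I would handle this with the KKLM-style integral inequality, i.e.\ a Margulis function built from covolumes $\covol(\Lambda\cap V)^{-\beta}$ over rational subspaces $V$, weighted by dimension. I would first try to prove the contraction
$$
 \int f(a_t u\Lambda)\,du\le c\,e^{-\gamma t}f(\Lambda)+b,
$$
with $\gamma$ corresponding to $1+w_1$ and $u$ ranging over boxes adapted to $a_t$. The difficulty is choosing the exponents per subspace dimension so that the expanding rate in the weakest direction still yields the contraction; this should be the genuinely weighted new input. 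Converting this contraction into a covering count at scale $e^{-(1+w_1)t}$ then gives the codimension $1/(1+w_1)$, and hence the theorem.
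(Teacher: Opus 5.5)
\textbf{There is a genuine gap: the proposal does not contain the upper-bound argument.}

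Your outer reduction is fine. The lower bound is Kim--Park, and $\Sing(\wt)\subset\DI_{\wt}(\eps)$. Your bookkeeping for covering rectangles with sides $|u|^{-1-w_i}$ by cubes of side $|u|^{-1-w_1}$ is also exactly where $1/(1+w_1)$ enters.

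The step you call the main obstacle is the whole difficulty of the theorem. That step is controlling children that concentrate on proper rational subspaces when $d\ge3$ and the weights differ. The planar acceleration of Liao--Shi--Solan--Tamam relies on the invariance $H_u=H_v$, which fails in higher dimension. Saying you ``would first try to prove'' a Margulis-function contraction, with exponents still to be chosen, is not an argument. Nothing in the proposal supplies such a function with the sharp rate on the proper subgroup $\{h(x)\}$. The sharp homogeneous-dynamics bound the paper cites (Solan) is only for the full horospherical subgroup.

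Your target inequality is also miscalibrated.
\begin{itemize}
\item With $q\approx e^t$, one step splits a rectangle into $\asymp e^{(d+1)t}$ sub-rectangles.
\item The sharp bound corresponds to $\asymp e^{dt}$ survivors, i.e.\ contraction rate exactly $1$.
\item After your cube covering, a rate $\gamma$ yields $\dim\le d-\gamma/(1+w_1)$.
\item So a rate $1+w_1$ would give $\dim\le d-1<\sstar$, contradicting Kim--Park.
\end{itemize}
Your account of $\sqrt\eps$ via a ``block length $\eps^{-1/2}$'' does not correspond to any working mechanism.

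\textbf{How the paper fills this without a Margulis function.}
\begin{itemize}
\item For $u\in\mathcal Q_\xi$ it finds a gap $\lambda_{m_u}(u)\le L_u$, $\lambda_{m_u+1}(u)>RL_u$ in the successive minima of $\Gamma_u=D_{|u|}\Lambda_u$ with respect to $K_\xi$.
\item The gap is arranged so that $\lambda_d(u)\ge\xi^{-1/2}$ if $m_u=d-1$, and $\prod_{i=m_u+1}^{d-1}\lambda_i(u)\gg\xi^{-1/2}$ otherwise.
\item The short vectors span $V_u$ and define a proper rational subspace $H_u\subset\R^{d+1}$. It contains every later best approximation of level at most $RL_u|u|$.
\item One then jumps to the first best approximation $w\notin H_u$; its predecessor $v$ lies in $H_u$.
\item The new input is the quotient identity $D_{|u|/|v|}\Gamma_v+V_u=\Gamma_u+V_u$. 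It transfers the large transverse minima of $\Gamma_u$ (at least $RL_u$, with product $\gg\xi^{-1/2}$) to $\Gamma_v$ on every subspace $W\not\subset V_{u,v}$. They gain an extra factor $k^{w_d}$, where $k=\lfloor|v|/|u|\rfloor$.
\item When $m_u=d-1$, that factor makes the sum over the $O(k^{m_u})$ intermediate $v$ in the $k$th shell converge uniformly in $\delta$.
\item The result is $\sum_{w}(|u|/|w|)^{d+\delta}\le C_0/R+C_1\sqrt\xi/\delta+C_0\xi/\delta^2$.
\item The $\sqrt\xi$ comes from the $\xi^{-1/2}$ thresholds and from $\xi/\delta^2$ (the double harmonic sum over $k,\ell$). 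This forces $\delta=M\sqrt\xi$.
\end{itemize}

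\textbf{The theorem itself does not need the $\sqrt\eps$ rate.} The appendix gives a much simpler route.
\begin{itemize}
\item If $u$ is minimal at $(x,T)$, then $D_{T/\eps}\Lambda_u$ has no nonzero vector of sup-norm below $1/2$.
\item Hence degenerate subspaces contribute at most $k^{d-1}$ points per level shell.
\item The one-step loss is only $H_\eps=1+C_d(1+\log(1/\eps))$.
\item The resulting dimension excess $\log H_\eps/((1+w_1)\log(1/\eps))$ still tends to $0$.
\end{itemize}
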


Kim and Park proved the lower bound \eqref{eq:KP}.
The upper bound follows from the next theorem and
\eqref{eq:SingDI} by letting $\eps\downarrow0$.

\begin{theorem}\label{thm:DI}
Let $d$ and $\wt$ satisfy the assumptions of
Theorem~\ref{thm:singular}.
There exist constants $C_{d,\wt}>0$ and
$\eps_0=\eps_0(d,\wt)>0$ such that
\begin{equation}\label{eq:DI-main}
 \dim_H\DI_{\wt}(\eps)
 \le \sstar+C_{d,\wt}\sqrt\eps
\end{equation}
for every $0<\eps<\eps_0$.
\end{theorem}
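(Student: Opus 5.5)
We will prove Theorem~\ref{thm:DI} by a covering argument built on weighted best approximations, extending the planar method of Liao--Shi--Solan--Tamam and the self-similar covering of Cheung--Chevallier to arbitrary $d$. For $x\in\DI_{\wt}(\eps)$ we consider the sequence of weighted best approximations $(p_k,q_k)$, meaning $q_k$ increases and $\|q_kx-p_k\|_{\wt}$ is minimal among all $q\le q_k$. Set $r_k=\|q_kx-p_k\|_{\wt}$. Dirichlet improvability then gives $r_k q_{k+1}<\eps$ for all large $k$. The basic object is a pair of consecutive best approximations, or, better for $d\ge2$, the rational subspace (primitive sublattice) that they and several further consecutive approximations span. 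We encode $x$ by the sequence of primitive vectors $v_k=(p_k,q_k)\in\Z^{d+1}$. We then bound the number of admissible continuations $v_{k+1}$ given the past, and control the Euclidean size of the set of $x$ compatible with a given finite word.

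The key steps, in order, are as follows.

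First, we show that for $x$ compatible with $v_k$, the set of possible $x$ is a weighted box around $p_k/q_k$. This box has side lengths $\asymp (\eps/q_{k+1})^{w_i}/q_k$, and it shrinks when $v_{k+1}$ is specified.

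Second, we prove a transference/lattice lemma. Let $\Lambda_k$ be the lattice spanned by $v_{k-d+1},\dots,v_k$ (after passing to a suitably chosen subsequence in which these are linearly independent, in the spirit of the Cheung--Chevallier ``self-similar coverings''). Its covolume satisfies $\covol(\Lambda_k)\ll \eps^{c}\,$(appropriate power of $q_k$). Using Minkowski's second theorem for the convex body $\{|q|<T,\ \|qx-p\|_{\wt}<\eps/T\}$, which has volume $\asymp\eps$, we will show that the successive minima force the next best approximation to lie in a thin slab. This bounds the number of choices of $v_{k+1}$ with $q_{k+1}\in[Q,2Q]$ by roughly $\eps\cdot(\text{volume ratio})$.

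Third, we sum over the tree of words. For each parent box and each $s=\sstar+C\sqrt\eps$, we aim to prove a contraction inequality of the form
\[
 \sum_{\text{children}} \operatorname{diam}(\text{child})^{s}\le \operatorname{diam}(\text{parent})^{s}.
\]
The diameter is governed by the longest side, exponent $w_d$ (the smallest weight gives the largest side $r^{w_d}$). We will not use diameters directly. Instead we will cover each weighted box by cubes of the side length dictated by the largest expansion rate $1+w_1$. This is where $\sstar=d-1/(1+w_1)$ emerges: the optimal cube size loses exactly $1/(1+w_1)$ dimension relative to the full $d$. Summing the count from the second step against these cube covers gives a series in $Q$ whose convergence threshold is $\sstar$ plus a correction. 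The correction comes from the $\eps$-dependence, through a term like $\eps^{-1}\sum_{Q} (\cdot)^{s-\sstar}$, and it is optimized at $s-\sstar\asymp\sqrt\eps$, reproducing the $\sqrt\eps$ of \cite{CC16,LSST}. The mass distribution/Hausdorff–Cantelli principle then gives \eqref{eq:DI-main}.

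The main obstacle is the second step in dimension $d\ge3$ with distinct weights. In the plane, consecutive best approximations form a basis and the geometry is two-dimensional. In general, one must control how the flag of rational subspaces spanned by recent best approximations evolves, and verify that the counting is uniform in the anisotropic shape of the weighted boxes. In particular, the worst case occurs when the box is elongated along the $w_1$-direction, which yields the exponent $1+w_1$. We would first try a Schmidt–Summerer style parametric geometry of numbers reduction: bound the counting by the log-volume of a template via a variational principle (\`a la \cite{DFSU24}, but upper bound only), and check that the supremum over templates of the local dimension is $\sstar+O(\sqrt\eps)$. If that proves too coarse, the fallback is an explicit induction on the dimension of the spanned subspace, applying the planar LSST estimate inside each two-dimensional section.
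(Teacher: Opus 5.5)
Your outer framework (best approximations, weighted rectangles covered by cubes of side $|u|^{-1-w_1}$, which is where $\sstar$ comes from, and a contracting successor sum) matches the paper. However, your second step, which carries all the weight, fails as stated, and the idea that replaces it is missing.

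A bound of the form ``$\ll\eps\cdot(\text{volume ratio})$'' for the number of successors at ratio $\ell$ holds only when the lattice points in the relevant convex body span the whole space. When the candidates lie in a proper rational subspace $W$ of dimension $m$, which is exactly the degenerate situation you flag, the available bound is $\ll\ell^m/(\nu_1\cdots\nu_m)$. Here the $\nu_i$ are minima of the sublattice in $W$, and they are only bounded below by a constant: Minkowski gives $\prod_i\lambda_i(u)\asymp\xi^{-1}$ for $\Gamma_u$, but several minima can be $\asymp1$. For $m=d-1$ this contributes $\sum_\ell\ell^{-1-\delta}\asymp\delta^{-1}$ to a one-step successor sum, so no contraction is possible with $\delta\asymp\sqrt\eps$. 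This is the same loss that limits the paper's one-step minimal-vector argument in the appendix to an error $O(\log\log(1/\eps)/\log(1/\eps))$. Passing to a subsequence on which $d$ consecutive approximations are independent does not help unless you also control the skipped approximations, and that control is the crux.

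Your account of the $\sqrt\eps$ is also off. A one-step full-rank count gives $\xi\sum_\ell\ell^{-1-\delta}\asymp\xi/\delta$. In the paper, the $\sqrt\eps$ comes from a two-scale sum $\xi\sum_{k,\ell}(k\ell)^{-1-\delta}\asymp\xi/\delta^2$ together with a $\xi^{-1/2}$ threshold on the successive minima.

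The missing ingredient is acceleration through rational subspaces.
\begin{itemize}
\item For a late best approximation $u$, the paper picks a gap $\lambda_{m_u}(u)\le L_u$, $\lambda_{m_u+1}(u)>RL_u$ in the successive minima of $\Gamma_u$ with respect to $K_\xi$. The gap is arranged so that either $\lambda_d(u)\ge\xi^{-1/2}$ or $\prod_{i=m_u+1}^{d-1}\lambda_i(u)\gg\xi^{-1/2}$.
\item It lifts the span $V_u$ of the short vectors to a proper rational subspace $H_u\subset\R^{d+1}$. It then jumps from $u$ to the first best approximation $w\notin H_u$; the predecessor $v$ of $w$ lies in $H_u$, and $|w|>RL_u|u|$.
\item The intermediate $v$ in the $k$th shell number $O(k^{m_u})$.
\item The exits $w$ are counted via the identity $D_{|u|/|v|}\Gamma_v+V_u=\Gamma_u+V_u$. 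This forces the minima of $\Gamma_v$ transverse to $V_{u,v}$ to be at least $\tfrac12(|v|/|u|)^{w_d}\lambda_{m_u+j}(u)$.
\item The resulting factor $k^{-w_d}$ makes the $k$-sum converge despite the $k^{m_u}$ witnesses. The gap then yields the bound $C_0/R+C_1\sqrt\xi/\delta+C_0\xi/\delta^2$, which is $<1$ for $\delta=M\sqrt\xi$.
\end{itemize}

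Neither of your fallbacks substitutes for this. The planar LSST count uses the invariance $H_u=H_v$, which does not extend directly to higher dimensions. You also do not explain how the restricted problem on a rational subspace reduces to the planar weighted setting. An upper-bound variational principle would need a weighted parametric geometry of numbers on $\{h(x)\}$ with a quantitative $\sqrt\eps$ rate, and none of that is sketched.
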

For the flow $a_t$ above, Solan
\cite[Corollary~2.34]{Solan21} obtains the upper bound
$\dim H-1/(1+w_1)$ for divergent points on the full unstable
horospherical subgroup $H$.
Theorem~\ref{thm:singular} gives the corresponding sharp bound
on $\{h(x):x\in\R^d\}$, which is a proper subgroup of $H$
except in the unweighted case.

Since $\Sing(\wt)\subseteq\DI_{\wt}(\eps)$, the two theorems give
$$
 0\le \dim_H\DI_{\wt}(\eps)-\sstar
 \le C_{d,\wt}\sqrt\eps
$$
for all sufficiently small $\eps>0$.
A natural further problem is to obtain quantitative lower bounds
for this difference and determine its asymptotic order
as $\eps\downarrow0$.

\subsection{Proof outline and organization}\label{subsec:outline}

The proof of Theorem~\ref{thm:DI} uses an acceleration of the
best-approximation sequence.
This approach goes back to Cheung \cite{Che11} and
Cheung--Chevallier \cite{CC16}; Liao--Shi--Solan--Tamam
\cite[Section~5]{LSST} develop its weighted form in dimension two.
The planar counting argument uses the identity $H_u=H_v$
established in \cite[Lemma~5.11]{LSST}.
Our counting estimates do not require this identity.

For a best approximation $u$, let $\Lambda_u$ be its Farey
lattice and $\Gamma_u$ its unimodular normalization.
For a common convex body $K_\xi$, we choose, at each sufficiently
late stage, $m_u$ and $L_u$ so that
$$
 \lambda_{m_u}(u)\le L_u,
 \qquad
 \lambda_{m_u+1}(u)>RL_u,
$$
where $\lambda_i(u)=\lambda_i(K_\xi,\Gamma_u)$ and $R$ is a
large constant.
The vectors in $\Gamma_u\cap L_uK_\xi$ span an
$m_u$-dimensional subspace $V_u$ and determine a proper rational
subspace $H_u\subset\R^{d+1}$.
The dimension of $H_u$ may vary with $u$.
Starting from $u$, we skip the later best approximations that
remain in $H_u$ and take the first one $w$ outside $H_u$;
let $v$ be the best approximation immediately preceding $w$.
Then
$$
 v\in H_u,
 \qquad
 w\notin H_u,
 \qquad
 |w|>RL_u|u|.
$$

We compare $\Gamma_u$ with the rescaled lattice
$D_{|u|/|v|}\Gamma_v$ in $\R^d/V_u$.
This comparison gives a factor $(|v|/|u|)^{w_d}$
in the successive minima transverse to $V_u$.
It provides the estimate needed to count the possible exits $w$.
Together with the count of intermediate approximations $v$,
this makes the accelerated successor sums contract.
The self-affine covering estimate then gives the
$O_{d,\wt}(\sqrt\eps)$ term in Theorem~\ref{thm:DI}.

Section~\ref{sec:best} develops the best-approximation estimates
and constructs the acceleration.
Section~\ref{sec:counting} establishes the quotient comparison
and the counting estimates.
Section~\ref{sec:upper} combines these estimates with the
self-affine covering argument to prove Theorem~\ref{thm:DI}.

Appendix~\ref{app:direct} gives a shorter independent proof
of the upper bound in Theorem~\ref{thm:singular}.
The proof uses minimal vectors.
Their minimality yields a separation estimate in a rescaled
Farey lattice, leading to a one-step counting bound
and a recursive covering argument.
The resulting bound suffices for the singular dimension
formula but does not give the quantitative estimate of
Theorem~\ref{thm:DI}.
\paragraph*{Notation.}
For nonnegative quantities $A$ and $B$, we write $A\ll B$
or $B\gg A$ if $A\le CB$ for some absolute constant $C>0$.
We write $A\asymp B$ if $A\ll B$ and $B\ll A$.
Subscripts indicate the parameters on which the implied
constants may depend.
\section{Best approximations and acceleration}\label{sec:best}
We first relate weighted best approximations to the geometry
of their Farey lattices.
We then use gaps between successive minima to construct
rational subspaces and accelerate the sequence by taking
the first exit from each subspace.
\subsection{Best approximations and Farey lattices}

For $u=(p,q)\in\Z^d\times\N$, set
$$
 \widehat u=\frac pq,\qquad
 A(x,u)=\|qx-p\|_{\wt},
$$
and call $|u|\coloneqq q$ the level of $u$.  Let
$$
 \mathcal Q=
 \{u=(p,q)\in\Z^d\times\N:
   \gcd(p_1,\ldots,p_d,q)=1\}.
$$
A vector $u\in\mathcal Q$ is a $\wt$-best approximation of $x$ if
$A(x,u)<A(x,v)$ for every $v\in\mathcal Q$ with $|v|<|u|$, and
$A(x,u)\le A(x,v)$ for every $v\in\mathcal Q$ with $|v|=|u|$.

For $\eps>0$, write
$$
 \DI_{\wt}^*(\eps)
 =
 \{x\in\DI_{\wt}(\eps):
   1,x_1,\ldots,x_d
   \text{ are linearly independent over }\Q\}.
$$ 
For $x\in\R^d$ such that $1,x_1,\ldots,x_d$ are linearly
independent over $\Q$, order the best approximations by increasing level, choosing one when more
than one occurs at the same level, and write them as
$u_j=(p_j,q_j)$.  Set
$$
 r_j=A(x,u_j).
$$
Then
$$
 |u_1|<|u_2|<\cdots,
 \qquad
 r_1>r_2>\cdots,
$$
and every $v\in\Z^d\times\N$ with $|v|<|u_{j+1}|$ satisfies
\begin{equation}\label{eq:record-property}
 A(x,v)\ge r_j.
\end{equation}

For $u=(p,q)\in\mathcal Q$, define
$$
 \pi_u(y,t)=y-t\widehat u,
 \qquad (y,t)\in\R^d\times\R.
$$
Its image
$$
 \Lambda_u:=\pi_u(\Z^{d+1})=\Z^d+\Z\widehat u
$$
is the Farey lattice associated with $u$; primitivity gives
$\covol(\Lambda_u)=|u|^{-1}$.  Set
$$
 r(u)=\min_{y\in\Lambda_u\setminus\{0\}}\|y\|_{\wt}.
$$
For $h>0$, put
$$
 D_h=\operatorname{diag}(h^{w_1},\ldots,h^{w_d}),
$$
and define
$$
 \Gamma_u=D_{|u|}\Lambda_u.
$$
Then $\Gamma_u$ is unimodular and
$$
 \min_{\gamma\in\Gamma_u\setminus\{0\}}\|\gamma\|_{\wt}
 =|u|r(u).
$$
The notation $r_j$ is reserved for the approximation error $A(x,u_j)$.

For $h>0$, set
$$
 B_{\wt}(h)
 =\{y\in\R^d:\|y\|_{\wt}\le h\}
 =\prod_{i=1}^d[-h^{w_i},h^{w_i}].
$$

\begin{lemma}\label{lem:short-competitor}
For every $u\in\mathcal Q$ with $|u|>1$, there exists
$v=(a,b)\in\mathcal Q$ such that
$$
 1\le |v|\le\frac{|u|}{2},
 \qquad
 A(\widehat u,v)=r(u).
$$
\end{lemma}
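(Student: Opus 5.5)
Every nonzero element of $\Lambda_u=\Z^d+\Z\widehat u$ has the form $y=k\widehat u-a$ with $k\in\Z$, $a\in\Z^d$. The plan is to show that the minimum defining $r(u)$ is attained at such a $y$ with $1\le k\le |u|/2$, and that $(a,k)$ can be taken in $\mathcal Q$. Then $v=(a,k)$ works, since $A(\widehat u,v)=\|k\widehat u-a\|_{\wt}$.

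First we reduce the parameter $k$ modulo $q=|u|$. Since $q\widehat u=p\in\Z^d$, we have $k\widehat u\equiv (k \bmod q)\widehat u \pmod{\Z^d}$, so we may take $0\le k<q$. The norm $\|\cdot\|_{\wt}$ is symmetric ($\|{-y}\|_{\wt}=\|y\|_{\wt}$), so replacing $y$ by $-y$ changes $k$ to $q-k$. Hence we may assume $0\le k\le q/2$. If $k=0$, then $y=-a\in\Z^d\setminus\{0\}$ and $\|y\|_{\wt}\ge 1$.

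Next we rule out $k=0$ as the minimizer, using $q>1$. Since $\gcd(p,q)=1$ and $q\ge2$, $\widehat u\notin\Z^d$, so some coordinate $\widehat u_i$ is non-integral. Taking $k=1$ and $a$ the nearest integer vector gives $y=\widehat u-a$ with $|y_i|\le 1/2<1$ in every coordinate, hence $\|y\|_{\wt}<1$ and $y\ne0$. Thus $r(u)<1$. A minimizer exists because $\Lambda_u$ is discrete and the sets $\{\|y\|_{\wt}\le c\}$ are compact, and by the above any minimizer has a representative with $1\le k\le q/2$.

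Finally, primitivity. Let $g=\gcd(a_1,\dots,a_d,k)$. Then $y/g=(k/g)\widehat u-a/g$ also lies in $\Lambda_u$ and is nonzero. The map $h\mapsto |y_i/g|^{1/w_i}$ is nonincreasing in $g$ coordinatewise, so $\|y/g\|_{\wt}\le\|y\|_{\wt}$, with strict inequality if $g>1$ and $y\neq 0$. Minimality therefore forces $g=1$. Alternatively, replace $(a,k)$ by $(a/g,k/g)$, which keeps $1\le k/g\le q/2$ and still attains the minimum. Setting $v=(a,k)\in\mathcal Q$ gives $1\le|v|\le|u|/2$ and $A(\widehat u,v)=\|k\widehat u-a\|_{\wt}=\|y\|_{\wt}=r(u)$.

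The only mildly delicate steps are the use of the symmetry to obtain the bound $|u|/2$ instead of $|u|-1$, and excluding $k=0$. The latter needs $|u|>1$ together with the fact that $\|\cdot\|_{\wt}$ of a nonzero integer vector is at least $1$.
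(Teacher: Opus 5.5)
Your argument is correct. It is the standard elementary proof of this fact: reduce the coefficient of $\widehat u$ modulo $|u|$, use $\|{-y}\|_{\wt}=\|y\|_{\wt}$ to get $1\le k\le |u|/2$ (with $k=0$ excluded because $r(u)<1$ once $|u|>1$), and observe that a minimizer is automatically primitive. The paper gives no argument of its own; it only cites Liao--Shi--Solan--Tamam, Corollary~5.2, as applying verbatim in dimension $d$, and your proof supplies exactly that reasoning.

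One small typo: in the primitivity step the map should read $g\mapsto |y_i/g|^{1/w_i}$.
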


\begin{proof}
The proof of Liao--Shi--Solan--Tamam
\cite[Corollary~5.2]{LSST} applies verbatim in every dimension.
\end{proof}

Set
$$
 c_{\wt}=2^{1/w_d}.
$$
For $u\in\mathcal Q$, let
$$
 \Delta(u)=\{x\in\R^d:u\text{ is a $\wt$-best approximation of }x\}.
$$

\begin{lemma}\label{lem:best-cell}
For every $u\in\mathcal Q$ with $|u|>1$,
$$
 \{x:A(x,u)<c_{\wt}^{-1}r(u)\}
 \subseteq \Delta(u)
 \subseteq
 \{x:A(x,u)<c_{\wt}r(u)\}.
$$
\end{lemma}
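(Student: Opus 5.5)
The whole argument rests on one elementary observation. For each coordinate $i$ we have $c_{\wt}^{w_i}=2^{w_i/w_d}\ge2$, because $w_i\ge w_d$. Hence
$$
 \|y\|_{\wt}\le h/c_{\wt}
 \quad\Longrightarrow\quad
 |y_i|\le h^{w_i}/2\ \text{ for every } i .
$$
I will not use any global quasi-triangle inequality for $\|\cdot\|_{\wt}$. Instead I compare coordinates one at a time, using the decomposition
$$
 bx-a=\frac bq\,(qx-p)+(b\widehat u-a),
 \qquad u=(p,q),\ v=(a,b),
$$
together with the fact that $b\widehat u-a\in\Lambda_u$. Write $\alpha=A(x,u)$ and $r=r(u)$.

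\emph{Upper inclusion.} Let $x\in\Delta(u)$ and suppose, for contradiction, that $\alpha\ge c_{\wt}r$. Take $v=(a,b)$ from Lemma~\ref{lem:short-competitor}, so $1\le b\le q/2$ and $\|b\widehat u-a\|_{\wt}=r$. Fix a coordinate $i$.
\begin{itemize}
 \item The first term satisfies $\frac bq|qx_i-p_i|\le\frac12\alpha^{w_i}$, since $b/q\le\frac12$.
 \item The second term satisfies $|b\widehat u_i-a_i|\le r^{w_i}\le(\alpha/c_{\wt})^{w_i}\le\frac12\alpha^{w_i}$, by the observation above.
\end{itemize}
Adding these gives $|bx_i-a_i|\le\alpha^{w_i}$ for all $i$, that is, $A(x,v)\le A(x,u)$. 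Since $|v|<|u|$, the definition of a best approximation requires $A(x,u)<A(x,v)$, a contradiction. Hence $\alpha<c_{\wt}r$.

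\emph{Lower inclusion.} Assume $\alpha<r/c_{\wt}$, and let $v=(a,b)\in\mathcal Q$ with $v\ne u$ and $b\le q$. It suffices to show that $A(x,v)>\alpha$ strictly, since this covers both the case $|v|<|u|$ and the case $|v|=|u|$ in the definition.

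First, $b\widehat u-a\neq0$. Otherwise $qa=bp$, and primitivity of $u$ and $v$ with $0<b\le q$ forces $v=u$. Since $b\widehat u-a$ is then a nonzero element of $\Lambda_u$, we get $\|b\widehat u-a\|_{\wt}\ge r$.

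Now suppose $A(x,v)\le\alpha$. For each $i$,
$$
 |b\widehat u_i-a_i|\le|bx_i-a_i|+\tfrac bq|qx_i-p_i|\le2\alpha^{w_i}<2(r/c_{\wt})^{w_i}\le r^{w_i},
$$
where the last step again uses $c_{\wt}^{w_i}\ge2$. This gives $\|b\widehat u-a\|_{\wt}<r$, a contradiction. Therefore $A(x,v)>\alpha$ for every such $v$, and $x\in\Delta(u)$.

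The only delicate point is getting strict versus non-strict inequalities to line up with the asymmetric definition of best approximation. In the upper inclusion, the contradiction only needs the non-strict bound $A(x,v)\le A(x,u)$, because the competitor $v$ has strictly smaller level. In the lower inclusion, the strict hypothesis $\alpha<r/c_{\wt}$ yields strict domination even at equal level. The constant $c_{\wt}=2^{1/w_d}$ is chosen precisely so that $c_{\wt}^{w_i}\ge2$ for every $i$.
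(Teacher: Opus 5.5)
Your proof is correct and follows the same route as the paper, which simply defers to the argument of Liao--Shi--Solan--Tamam (their Lemma~5.4). That argument is a coordinatewise comparison through $bx-a=\frac bq(qx-p)+(b\widehat u-a)$, with $c_{\wt}^{w_i}\ge 2$ absorbing the factor $2$: the outer inclusion uses the short competitor of Lemma~\ref{lem:short-competitor}, and the inner one uses the minimality of $r(u)$ on $\Lambda_u\setminus\{0\}$. You have written out the details the paper leaves to that citation, and you handle correctly both the strict versus non-strict inequalities and the nondegeneracy $b\widehat u-a\neq0$ for $v\neq u$.
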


\begin{proof}
This is the argument of Liao--Shi--Solan--Tamam
\cite[Lemma~5.4]{LSST}; the proof is unchanged in arbitrary dimension.
\end{proof}

\begin{lemma}\label{lem:comparison}
For every $j$ with $|u_j|>1$,
\begin{equation}\label{eq:record-vs-quotient}
 r_j<c_{\wt}r(u_j).
\end{equation}
For every $k>j$,
$$
 A(\widehat u_k,u_j)
 \le c_{\wt}r_j
 <c_{\wt}^2r(u_j).
$$
\end{lemma}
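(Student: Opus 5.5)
The first inequality is immediate from Lemma~\ref{lem:best-cell}. Since $u_j$ is a $\wt$-best approximation of $x$, we have $x\in\Delta(u_j)$. For $|u_j|>1$ the right-hand inclusion of Lemma~\ref{lem:best-cell} then gives
$$
 r_j=A(x,u_j)<c_{\wt}r(u_j),
$$
which is \eqref{eq:record-vs-quotient}.

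For the second claim, fix $k>j$ and write $u_j=(p_j,q_j)$ and $u_k=(p_k,q_k)$. I would expand $q_j\widehat u_k-p_j$ around $q_jx-p_j$:
$$
 q_j\widehat u_k-p_j=(q_jx-p_j)-\frac{q_j}{q_k}(q_kx-p_k).
$$
In coordinate $i$ the first term has absolute value at most $r_j^{w_i}$. The second has absolute value at most $(q_j/q_k)r_k^{w_i}\le r_k^{w_i}<r_j^{w_i}$, because $q_j<q_k$ and $r_k<r_j$. Hence each coordinate is bounded by $2r_j^{w_i}$. Raising to the power $1/w_i$ gives
$$
 |q_j\widehat u_k-p_j|_i^{1/w_i}\le 2^{1/w_i}r_j\le 2^{1/w_d}r_j=c_{\wt}r_j,
$$
using $w_i\ge w_d$. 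Taking the maximum over $i$ yields $A(\widehat u_k,u_j)\le c_{\wt}r_j$. Here I use that $c_{\wt}=2^{1/w_d}$ dominates every $2^{1/w_i}$, which is exactly why the constant was chosen this way. The final strict inequality $c_{\wt}r_j<c_{\wt}^2r(u_j)$ then follows by multiplying \eqref{eq:record-vs-quotient} by $c_{\wt}$. This is valid for $|u_j|>1$, which is the standing hypothesis.

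There is no serious obstacle. The one point to check is the convention that the max-quasinorm $\|\cdot\|_{\wt}$ is not subadditive: it satisfies only $\|a+b\|_{\wt}\le 2^{1/w_d}\max(\|a\|_{\wt},\|b\|_{\wt})$. That is why the coordinatewise triangle inequality above has to be done before taking $1/w_i$ powers, and it accounts for the factor $c_{\wt}$ rather than $2$.
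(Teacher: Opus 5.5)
Your proposal is correct and follows essentially the same route as the paper: the first inequality comes from the right-hand inclusion of Lemma~\ref{lem:best-cell}, and the second from the same coordinatewise identity $|u_j|\widehat u_{k,i}-p_{j,i}=(|u_j|x_i-p_{j,i})-\frac{|u_j|}{|u_k|}(|u_k|x_i-p_{k,i})$ together with $|u_j|<|u_k|$ and $r_k<r_j$. Your proof also spells out explicitly the step the paper leaves implicit: $2^{1/w_i}\le 2^{1/w_d}=c_{\wt}$.
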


\begin{proof}
The first inequality follows from Lemma~\ref{lem:best-cell}.  For $k>j$,
coordinatewise,
$$
 |u_j|\widehat u_{k,i}-p_{j,i}
 =(|u_j|x_i-p_{j,i})
 -\frac{|u_j|}{|u_k|}(|u_k|x_i-p_{k,i}).
$$
Since $|u_j|<|u_k|$ and $r_k<r_j$, this gives
$A(\widehat u_k,u_j)\le c_{\wt}r_j$.  The last inequality follows from
\eqref{eq:record-vs-quotient}.
\end{proof}

The next estimates are the higher-dimensional form of
Liao--Shi--Solan--Tamam \cite[Lemma~5.7 and Corollary~5.8]{LSST}.

\begin{lemma}\label{lem:DI-records}
Let $0<\eps<1$ and $x\in\DI_{\wt}^*(\eps)$.  For all sufficiently large
$j$,
$$
 |u_{j+1}|r_j\le\eps,
 \qquad
 |u_j|r(u_j)\le c_{\wt}\eps,
 \qquad
 |u_{j+1}|A(\widehat u_{j+1},u_j)\le c_{\wt}\eps.
$$
Moreover,
\begin{equation}\label{eq:beta}
 x\in\beta(u_j):=
 \prod_{i=1}^d
 \left[
  \widehat u_{j,i}-|u_j|^{-1-w_i},
  \widehat u_{j,i}+|u_j|^{-1-w_i}
 \right].
\end{equation}
\end{lemma}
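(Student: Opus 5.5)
The plan is to derive all four assertions from the Dirichlet condition applied at the threshold $T=|u_{j+1}|$, and then combine the results with the record property \eqref{eq:record-property} and Lemmas~\ref{lem:best-cell}--\ref{lem:comparison}. Since $x\in\DI_{\wt}(\eps)$, there is $T_0$ such that every $T\ge T_0$ admits $(p,q)$ with $0<q<T$ and $\|qx-p\|_{\wt}<\eps/T$. Irrationality of $x$ guarantees that $|u_j|\to\infty$. Hence $|u_{j+1}|\ge T_0$ and $|u_j|>1$ for all large $j$.

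\textbf{First inequality.} Take $T=|u_{j+1}|$ and obtain $v=(p,q)$ with $q<|u_{j+1}|$ and $A(x,v)<\eps/|u_{j+1}|$. We may assume $v\in\mathcal Q$: dividing by the gcd only lowers $q$ and the error, because $\|\cdot\|_{\wt}$ is monotone under scaling by factors in $(0,1]$. By \eqref{eq:record-property}, $A(x,v)\ge r_j$, so $|u_{j+1}|r_j<\eps$.

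\textbf{Second inequality.} Apply the first inequality with $j$ replaced by $j-1$, which gives $|u_j|r_{j-1}\le\eps$. To bound $r(u_j)$, bound the shortest vector of $\Lambda_u$ by a competitor. The vector $\pi_{u_j}(u_{j-1})=p_{j-1}-q_{j-1}\widehat u_j$ lies in $\Lambda_{u_j}$, and it is nonzero since $u_{j-1}\neq u_j$ are both primitive. Its $\wt$-size is $A(\widehat u_j,u_{j-1})$. Lemma~\ref{lem:comparison} gives $A(\widehat u_j,u_{j-1})\le c_{\wt}r_{j-1}$. Therefore
\[
|u_j|r(u_j)\le |u_j|c_{\wt}r_{j-1}\le c_{\wt}\eps.
\]
The third inequality follows the same way, from Lemma~\ref{lem:comparison} with $k=j+1$ together with the first inequality: $|u_{j+1}|A(\widehat u_{j+1},u_j)\le c_{\wt}|u_{j+1}|r_j\le c_{\wt}\eps$.

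The one point needing care is the constant $c_{\wt}$ in Lemma~\ref{lem:comparison}. Coordinatewise, $|a-b|\le 2\max(|a|,|b|)$, and raising to the power $1/w_i$ gives the factor $2^{1/w_i}\le 2^{1/w_d}=c_{\wt}$. So the stated constant is the right one, and this step is routine.

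\textbf{Localization \eqref{eq:beta}.} We need $|q_jx_i-p_{j,i}|\le|u_j|^{-w_i}$, i.e.\ $A(x,u_j)=r_j\le 1/|u_j|$. From the first inequality, $r_j\le\eps/|u_{j+1}|<\eps/|u_j|<1/|u_j|$, using $\eps<1$. Each coordinate then satisfies $|q_jx_i-p_{j,i}|\le r_j^{w_i}<|u_j|^{-w_i}$; dividing by $q_j=|u_j|$ gives $|x_i-\widehat u_{j,i}|<|u_j|^{-1-w_i}$, which is \eqref{eq:beta}. There is no serious obstacle. The only care needed is the index shift and the nonvanishing of the competitor vector $\pi_{u_j}(u_{j-1})$ in the second inequality.
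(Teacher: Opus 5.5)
Your proof is correct and follows the paper's argument essentially verbatim: apply the Dirichlet condition at $T=|u_{j+1}|$ together with the record property; shift the index and use the competitor $\pi_{u_j}(u_{j-1})\in\Lambda_{u_j}\setminus\{0\}$ with Lemma~\ref{lem:comparison}; and deduce the localization from $r_j\le\eps/|u_{j+1}|<|u_j|^{-1}$. Your extra checks (primitive reduction, nonvanishing of the competitor, and the constant $c_{\wt}=2^{1/w_d}$) are all correct, and they spell out steps the paper leaves implicit.
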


\begin{proof}
Apply the definition of $\DI_{\wt}(\eps)$ with $T=|u_{j+1}|$.
By \eqref{eq:record-property}, $$|u_{j+1}|r_j\le\eps.$$  Applying it with
$T=|u_j|$ gives $|u_j|r_{j-1}\le\eps.$  Since
$\pi_{u_j}(u_{j-1})\in\Lambda_{u_j}\setminus\{0\}$,
$$
 r(u_j)\le A(\widehat u_j,u_{j-1})\le c_{\wt}r_{j-1},
$$
and hence $$|u_j|r(u_j)\le c_{\wt}\eps.$$  The third estimate follows from
$A(\widehat u_{j+1},u_j)\le c_{\wt}r_j$.  Finally,
$r_j\le\eps/|u_{j+1}|<|u_j|^{-1}$, which gives \eqref{eq:beta}.
\end{proof}

Put
$$
 \xi=c_{\wt}\eps
$$
and set
$$
 \mathcal Q_{\xi}
 =\{u\in\mathcal Q:|u|>1,\ |u|r(u)\le\xi\}.
$$
Every sufficiently late best approximation of a point in
$\DI_{\wt}^*(\eps)$ belongs to $\mathcal Q_{\xi}$.

\subsection{Successive minima and acceleration}\label{sec:gaps}

In dimension two, the acceleration of Liao--Shi--Solan--Tamam relies on
the invariance of a rational hyperplane established in
\cite[Lemma~5.11]{LSST}, but this invariance does not extend directly to
higher dimensions.  The argument below overcomes this obstruction.

For a symmetric convex body $K$ in the span of a lattice $\Gamma$, write
$\lambda_i(K,\Gamma)$ for the $i$th successive minimum with respect to
ordinary scalar dilations of $K$.

Fix $R\ge4d$ and put
$$
 K_{\xi}=B_{\wt}(c_{\wt}^2\xi).
$$
For $u\in\mathcal Q_{\xi}$, let
$$
 \lambda_i(u)=\lambda_i(K_{\xi},\Gamma_u),
 \qquad 1\le i\le d.
$$  Since $|u|r(u)\le\xi$,
$\lambda_1(u)\le1$.  As
$\vol(K_{\xi})=2^dc_{\wt}^2\xi$, Minkowski's second theorem gives
$$
 \prod_{i=1}^d\lambda_i(u)\asymp_{d,\wt}\xi^{-1}.
$$

\begin{lemma}\label{lem:gap}
For $\xi$ sufficiently small in terms of $d$, $\wt$ and $R$, every
$u\in\mathcal Q_{\xi}$ admits
$m_u\in\{1,\ldots,d-1\}$ and $L_u\ge1$ such that
\begin{equation}\label{eq:gap}
 \lambda_{m_u}(u)\le L_u,
 \qquad
 \lambda_{m_u+1}(u)>RL_u.
\end{equation}
If $m_u=d-1$, then
\begin{equation}\label{eq:last-minimum-large}
 \lambda_d(u)\ge\xi^{-1/2}.
\end{equation}
If $m_u\le d-2$, then
$$
 L_u\le R^{d-2}
$$
and
\begin{equation}\label{eq:transverse-product}
 \prod_{i=m_u+1}^{d-1}\lambda_i(u)
 \gg_{d,\wt,R}\xi^{-1/2}.
\end{equation}
\end{lemma}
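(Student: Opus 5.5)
The plan is a pigeonhole argument on the successive minima $\lambda_1(u)\le\cdots\le\lambda_d(u)$. It uses only two facts established just before the statement: $\lambda_1(u)\le1$, and Minkowski's second theorem in the form
$$
\prod_{i=1}^d\lambda_i(u)\asymp_{d,\wt}\xi^{-1}.
$$
Throughout, $R\ge4d$ is fixed and $\xi$ is taken small in terms of $d,\wt,R$. Write $\lambda_i=\lambda_i(u)$.

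\emph{Case 1: a top gap.} Suppose
$$
\lambda_d\ge\xi^{-1/2}
\quad\text{and}\quad
\lambda_d>R\max(1,\lambda_{d-1}).
$$
Take $m_u=d-1$ and $L_u=\max(1,\lambda_{d-1})$. Then $L_u\ge1$, $\lambda_{d-1}\le L_u$ and $\lambda_d>RL_u$, so \eqref{eq:gap} holds. Moreover \eqref{eq:last-minimum-large} holds by assumption.

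\emph{Case 2: no top gap.} Otherwise, either $\lambda_d<\xi^{-1/2}$, or $\lambda_d\le R\max(1,\lambda_{d-1})$. I first show that there is some $m\le d-2$ with $\lambda_{m+1}>R^{m}$. Suppose not. Then $\lambda_{d-1}\le R^{d-2}$, and hence $\prod_{i<d}\lambda_i\le R^{O_d(1)}$.
\begin{itemize}
\item In the first alternative this gives $\prod_{i\le d}\lambda_i\le R^{O_d(1)}\xi^{-1/2}$.
\item In the second alternative $\lambda_d\le R^{d-1}$, so every minimum is bounded by a power of $R$.
\end{itemize}
Both contradict $\prod_{i\le d}\lambda_i\gg_{d,\wt}\xi^{-1}$ once $\xi$ is small.

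So let $m_u$ be the \emph{smallest} such index and set $L_u=R^{m_u-1}$. By minimality, $\lambda_{m_u}\le R^{m_u-1}=L_u$; for $m_u=1$ this is just $\lambda_1\le1$. By the choice of $m_u$, $\lambda_{m_u+1}>RL_u$. We also have $1\le L_u\le R^{d-3}\le R^{d-2}$.

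\emph{Transverse product.} Let $P=\prod_{i=m_u+1}^{d-1}\lambda_i$. Minimality gives $\lambda_i\le R^{i-1}$ for $i\le m_u$, so
$$
\prod_{i\le m_u}\lambda_i\le R^{O_d(1)},
\qquad\text{hence}\qquad
P\lambda_d=\prod_{i>m_u}\lambda_i\gg_{d,\wt,R}\xi^{-1}.
$$
\begin{itemize}
\item If $\lambda_d<\xi^{-1/2}$, dividing by $\lambda_d$ gives $P\gg\xi^{-1/2}$ directly.
\item If instead $\lambda_d\le R\max(1,\lambda_{d-1})$, note that every factor of $P$ is at least $\lambda_{m_u+1}>1$. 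Hence $\max(1,\lambda_{d-1})=\lambda_{d-1}\le P$, and so $\lambda_d\le RP$. Therefore $P^2\ge P\lambda_d/R\gg\xi^{-1}$, which again gives \eqref{eq:transverse-product}.
\end{itemize}

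The main obstacle is organising the cases so that the transverse product is controlled even though $\lambda_d$ could absorb most of the covolume. This is why the split is made according to whether $\lambda_d$ is both large and separated from $\lambda_{d-1}$, rather than simply taking the first large ratio $\lambda_{i+1}/\lambda_i$. Very small lower minima cause no trouble: every estimate above uses only upper bounds on $\lambda_1,\dots,\lambda_{m_u}$.
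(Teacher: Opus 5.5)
Your proof is correct and follows the paper's argument. You use the same case split (whether $\lambda_d\ge\xi^{-1/2}$ and $\lambda_d>R\max(1,\lambda_{d-1})$), the same pigeonhole contradiction against $\prod_i\lambda_i\gg\xi^{-1}$, and the same choice of the smallest gap index. The differences are cosmetic: you use absolute thresholds $\lambda_{m+1}>R^m$ with $L_u=R^{m_u-1}$ instead of relative gaps $a_{m+1}>Ra_m$ with $L_u=a_{m_u}$, $a_i=\max\{1,\lambda_i\}$. In the subcase $\lambda_d\le R\max(1,\lambda_{d-1})$ you bound $P^2\ge P\lambda_d/R$, whereas the paper reads off $\lambda_{d-1}\ge R^{-1}\xi^{-1/2}$ directly.
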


\begin{proof}
Set $a_i=\max\{1,\lambda_i(u)\}$.  Then $a_1=1$ and
\begin{equation}\label{eq:a-product}
 \prod_{i=1}^d a_i\gg_{d,\wt}\xi^{-1}.
\end{equation}
If
$$
 \lambda_d(u)\ge\xi^{-1/2}
 \quad\text{and}\quad
 \lambda_d(u)>Ra_{d-1},
$$
take $m_u=d-1$ and $L_u=a_{d-1}$.

Otherwise, if $\lambda_d(u)<\xi^{-1/2}$, then
\eqref{eq:a-product} gives
$$
 \prod_{i=1}^{d-1}a_i\gg_{d,\wt}\xi^{-1/2};
$$
while if $\lambda_d(u)\ge\xi^{-1/2}$, the failure of the first case gives
$$a_{d-1}\ge R^{-1}\xi^{-1/2}.$$  If $a_{i+1}\le Ra_i$ for every $1\le i\le d-2$, then
$a_i\le R^{i-1}$ for $1\le i\le d-1$.  In the first case this bounds
$\prod_{i=1}^{d-1}a_i$ independently of $\xi$; in the second it gives
$a_{d-1}\le R^{d-2}$.  Both conclusions contradict the corresponding lower
bound when $\xi$ is sufficiently small.  Hence there is
$m_u\in\{1,\ldots,d-2\}$ such that
$$
 a_{m_u+1}>Ra_{m_u}.
$$
Choose the smallest such index and put $L_u=a_{m_u}$.

By the choice of $m_u$, one has
$a_i\le R^{i-1}$ for $1\le i\le m_u$, and therefore
$L_u=a_{m_u}\le R^{d-2}$.  Since
$$a_{m_u+1}>Ra_{m_u}\ge R,$$ one has $a_i=\lambda_i(u)$ for $i>m_u$.
If $\lambda_d(u)<\xi^{-1/2}$, divide the lower bound for
$\prod_{i=1}^{d-1}a_i$ by the bounded product of the first $m_u$
factors.  If $\lambda_d(u)\ge\xi^{-1/2}$, the failure of the first case
gives $\lambda_{d-1}(u)\ge R^{-1}\xi^{-1/2}$.  This proves
\eqref{eq:transverse-product}.  The case $d=2$ is contained in the first alternative
for sufficiently small $\xi$.
\end{proof}

Set
$$
 V_u=\spanR\bigl(\Gamma_u\cap L_uK_{\xi}\bigr),
 \qquad
 H_u=(D_{|u|}\pi_u)^{-1}(V_u).
$$
Then
$$
 \dim V_u=m_u,
 \qquad
 \dim H_u=m_u+1\le d.
$$
Choose a basis $\gamma_1,\ldots,\gamma_{m_u}$ of $V_u$ from
$\Gamma_u$, and integer lifts $v_i\in\Z^{d+1}$ satisfying
$D_{|u|}\pi_u(v_i)=\gamma_i$.  Since the kernel of $\pi_u$ is $\R u$,
$$
 H_u=\spanR\{u,v_1,\ldots,v_{m_u}\}.
$$
Thus $H_u$ is a proper rational subspace of $\R^{d+1}$.

\begin{lemma}\label{lem:absorb}
Let $u\in\mathcal Q_{\xi}$ and $v\in\mathcal Q$.  If
$$
 |u|\le|v|\le RL_u|u|,
 \qquad
 A(\widehat v,u)\le c_{\wt}^2r(u),
$$
then $v\in H_u$.
\end{lemma}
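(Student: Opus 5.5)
Our plan is to project $v$ to the Farey lattice $\Lambda_u$ and show that, after normalization, it gives a lattice vector of $\Gamma_u$ lying in $RL_uK_\xi$. The gap condition \eqref{eq:gap} then forces it into $V_u$, and pulling back gives $v\in H_u$.

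Write $v=(b',b)$ with $b=|v|$, and consider $\pi_u(v)=b'-b\widehat u\in\Lambda_u$. Its coordinates are $b(\widehat v_i-\widehat u_i)$. The hypothesis $A(\widehat v,u)\le c_{\wt}^2r(u)$ gives, for each $i$,
$$
|u|\,|\widehat v_i-\widehat u_i|\le (c_{\wt}^2r(u))^{w_i},
$$
where we use $|u|\widehat v_i-p_{u,i}=|u|(\widehat v_i-\widehat u_i)$. Hence
$$
|\pi_u(v)_i|\le \frac{b}{|u|}\,(c_{\wt}^2r(u))^{w_i}.
$$
Applying $D_{|u|}$ multiplies the $i$th coordinate by $|u|^{w_i}$, so
$$
|\gamma_i|\le \frac{b}{|u|}\,(c_{\wt}^2|u|r(u))^{w_i}\le \frac{b}{|u|}\,(c_{\wt}^2\xi)^{w_i},
$$
where $\gamma=D_{|u|}\pi_u(v)\in\Gamma_u$ and the last step uses $u\in\mathcal Q_\xi$. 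Since $K_\xi=\prod_i[-(c_{\wt}^2\xi)^{w_i},(c_{\wt}^2\xi)^{w_i}]$ and $b/|u|\le RL_u$, this says exactly
$$
\gamma\in (b/|u|)K_\xi\subseteq RL_uK_\xi .
$$
The point is that $K_\xi$ is dilated by ordinary scalars, not weighted ones, so the factor $b/|u|$ multiplies every coordinate uniformly; this is why the successive minima are defined with scalar dilations.

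Now use the gap. Since $\lambda_{m_u+1}(u)>RL_u$, every vector of $\Gamma_u$ in $RL_uK_\xi$ (closed body; the gap inequality is strict) lies in the span of the first $m_u$ minima. That span equals $V_u=\spanR(\Gamma_u\cap L_uK_\xi)$, because $\lambda_{m_u}\le L_u<RL_u<\lambda_{m_u+1}$. More precisely, the lattice points in $RL_uK_\xi$ span a space of dimension at most $m_u$, and this space contains $V_u$, which has dimension $m_u$. Hence $\gamma\in V_u$, including the case $\gamma=0$, i.e.\ $v\in\R u$. Therefore
$$
v\in (D_{|u|}\pi_u)^{-1}(V_u)=H_u .
$$

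The only delicate point is the coordinate computation: checking that $A(\widehat v,u)$ is measured as $\|\,|u|\widehat v-p_u\|_{\wt}$, so the $1/w_i$ powers convert correctly, and that the factor $b/|u|\ge1$ enters linearly rather than as $(b/|u|)^{w_i}$. The lower bound $|u|\le|v|$ is not essential; it only ensures $b/|u|\ge1$, which is harmless.
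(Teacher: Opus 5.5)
Your proof is correct and follows essentially the same route as the paper. You show that $D_{|u|}\pi_u(v)\in \frac{|v|}{|u|}B_{\wt}(c_{\wt}^2|u|r(u))\subset RL_uK_\xi$, then use the strict gap $\lambda_{m_u+1}(u)>RL_u$ to force this lattice vector into $V_u$, which gives $v\in H_u$. Your coordinate check and your remark that the lower bound $|u|\le|v|$ is not needed are both accurate.
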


\begin{proof}
Since
$$
 D_{|u|}\pi_u(v)
 \in \frac{|v|}{|u|}B_{\wt}(c_{\wt}^2|u|r(u))
 \subset RL_uK_{\xi},
$$
if $D_{|u|}\pi_u(v)\notin V_u$, then this vector together with
$m_u$ independent vectors in
$\Gamma_u\cap L_uK_{\xi}$ would give
$m_u+1$ independent vectors in $RL_uK_{\xi}$.  Hence
$\lambda_{m_u+1}(u)\le RL_u$, contradicting \eqref{eq:gap}.
\end{proof}

Let $x\in\DI_{\wt}^*(\eps)$, and let $u$ be a
sufficiently late best approximation.  As in Liao--Shi--Solan--Tamam
\cite[Lemma~5.9]{LSST}, let $w$ be the first best approximation after $u$
in the ordered sequence that does not belong to $H_u$, and let $v$ be the
best approximation immediately preceding $w$.  Then
\begin{equation}\label{eq:accelerated-step}
 v\in H_u,
 \qquad
 w\notin H_u,
 \qquad
 |w|>RL_u|u|.
\end{equation}
The last inequality follows from Lemmas~\ref{lem:comparison} and
\ref{lem:absorb}.  Since $(\widehat u_j,1)\to(x,1)$ and $H_u$ is a proper
rational subspace, such an exit always occurs.  Iterating this choice produces an infinite accelerated subsequence of best
approximations satisfying \eqref{eq:accelerated-step} at every step.

\section{Successive minima and counting}\label{sec:counting}

We now count the possible intermediate approximations
$v\in H_u$ and exits $w\notin H_u$ in an accelerated step.
For the exit count, we first compare the successive minima
of the lattices associated with $u$ and $v$.
\subsection{Comparison of successive minima}

Let $u,v\in\mathcal Q_{\xi}$ with $v\in H_u$ and $|v|\ge |u|$.  Set
$$
 K_v=B_{\wt}(c_{\wt}^2|v|r(v)).
$$
Since $|v|r(v)$ is the shortest weighted length in $\Gamma_v$,
\begin{equation}\label{eq:lambda1-lower}
 \lambda_1(K_v,\Gamma_v)\ge c_{\wt}^{-2w_1}.
\end{equation}
Set
$$
 V_{u,v}=D_{|v|}\pi_v(H_u).
$$

\begin{lemma}\label{lem:quotient-identification}
The rescaled subspace and lattice satisfy
$$
 D_{|u|/|v|}V_{u,v}=V_u
$$
and
\begin{equation}\label{eq:lattice-mod-V}
 D_{|u|/|v|}\Gamma_v+V_u=\Gamma_u+V_u.
\end{equation}
\end{lemma}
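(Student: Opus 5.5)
The plan is to reduce both identities to one elementary observation: on $H_u$ the two projections $\pi_u$ and $\pi_v$ differ by a vector lying in $\pi_u(H_u)$. We also use $D_aD_b=D_{ab}$, which gives
$$
 D_{|u|/|v|}D_{|v|}=D_{|u|}.
$$
Consequently $D_{|u|/|v|}V_{u,v}=D_{|u|}\pi_v(H_u)$ and $D_{|u|/|v|}\Gamma_v=D_{|u|}\Lambda_v=D_{|u|}\pi_v(\Z^{d+1})$. Since $D_{|u|}$ is invertible, it suffices to prove two statements about the unrescaled lattices:
$$
 \pi_v(H_u)=\pi_u(H_u),
 \qquad
 \pi_v(\Z^{d+1})+\pi_u(H_u)=\pi_u(\Z^{d+1})+\pi_u(H_u).
$$
Here $\pi_u(H_u)=D_{|u|}^{-1}V_u$ by the definition of $H_u$ and the surjectivity of $\pi_u$.

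The key computation comes first. For $z=(y,t)\in\R^d\times\R$,
$$
 \pi_v(z)=y-t\widehat v=\pi_u(z)-t(\widehat v-\widehat u).
$$
Writing $v=(p,q)$ with $q=|v|$, we have $\pi_u(v)=p-|v|\widehat u=|v|(\widehat v-\widehat u)$. Because $v\in H_u$, this gives
$$
 \widehat v-\widehat u=|v|^{-1}\pi_u(v)\in\pi_u(H_u).
$$
Hence $\pi_v(z)\equiv\pi_u(z)$ modulo $\pi_u(H_u)$ for every $z\in\R^{d+1}$.

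For the subspace identity, take $z\in H_u$. Then $\pi_u(z)\in\pi_u(H_u)$, so the congruence yields $\pi_v(H_u)\subseteq\pi_u(H_u)$. To get equality, compare dimensions. The kernel of $\pi_v$ is $\R v$, which is contained in $H_u$. Therefore
$$
 \dim\pi_v(H_u)=\dim H_u-1=m_u=\dim\pi_u(H_u),
$$
and the inclusion is an equality. Applying $D_{|u|}$ gives $D_{|u|/|v|}V_{u,v}=V_u$.

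For \eqref{eq:lattice-mod-V}, apply the same congruence to integer points $z\in\Z^{d+1}$. It gives
$$
 \pi_v(z)+\pi_u(H_u)=\pi_u(z)+\pi_u(H_u).
$$
Taking the union over all $z\in\Z^{d+1}$ gives the second reduced identity. Applying $D_{|u|}$, and using $D_{|u|}\pi_u(H_u)=V_u$, yields \eqref{eq:lattice-mod-V}.

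I do not expect a real obstacle here. The only points needing care are the membership $\widehat v-\widehat u\in\pi_u(H_u)$, which is exactly where the hypothesis $v\in H_u$ enters, and the dimension count for $\pi_v(H_u)$, which uses $\ker\pi_v=\R v\subset H_u$. The hypotheses $|v|\ge|u|$ and $u,v\in\mathcal Q_{\xi}$ are not needed for this lemma. They matter only for the subsequent comparison of successive minima.
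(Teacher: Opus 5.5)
Your proof is correct and is essentially the paper's argument. You use the congruence $\pi_v(z)\equiv\pi_u(z)$ modulo $\pi_u(H_u)$, coming from $\widehat v-\widehat u=|v|^{-1}\pi_u(v)$ and $v\in H_u$, together with the dimension count via $\ker\pi_v=\R v\subset H_u$ and then specialization to integer points. The paper does the same, only writing the difference directly in rescaled form as $-\tfrac{b}{|v|}D_{|u|}\pi_u(v)$.
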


\begin{proof}
For $r=(a,b)\in\R^{d+1}$,
$$
 D_{|u|/|v|}D_{|v|}\pi_v(r)-D_{|u|}\pi_u(r)
 =-\frac{b}{|v|}D_{|u|}\pi_u(v).
$$
Since $v\in H_u$, the right-hand side belongs to $V_u$.  Taking
$r\in H_u$ gives $D_{|u|/|v|}V_{u,v}\subset V_u$.  Since
$\ker(\pi_v|_{H_u})=\R v$, both spaces have dimension $m_u$, so they are
equal.  Taking $r\in\Z^{d+1}$ shows that
$D_{|u|/|v|}\Gamma_v$ and $\Gamma_u$ have the same image in $\R^d/V_u$,
which gives \eqref{eq:lattice-mod-V}.
\end{proof}

\begin{lemma}\label{lem:minima-comparison}
Let $W\subset\R^d$ be a $\Gamma_v$-rational subspace of dimension $m$
with $W\not\subset V_{u,v}$.  Put
$$
 \nu_i=\lambda_i(K_v\cap W,\Gamma_v\cap W),
 \qquad 1\le i\le m.
$$
Then
\begin{equation}\label{eq:minima-last}
 \nu_m\ge\frac12 \left(\frac{|v|}{|u|}\right)^{w_d}\lambda_{m_u+1}(u).
\end{equation}
For every integer $j$ with $1\le j\le m-m_u$, we have
\begin{equation}\label{eq:minima-many}
 \nu_{m_u+j}\ge\frac12 \left(\frac{|v|}{|u|}\right)^{w_d}\lambda_{m_u+j}(u).
\end{equation}
\end{lemma}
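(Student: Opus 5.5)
The plan is to push short vectors of $\Gamma_v\cap W$ into $\Gamma_u+V_u$ by the map $D_{|u|/|v|}$, using Lemma~\ref{lem:quotient-identification}. Then I correct them by short vectors of $\Gamma_u\cap V_u$ to get genuine vectors of $\Gamma_u$. If the $\nu_i$ were too small, these corrected vectors would contradict the definition of $\lambda_{m_u+j}(u)$ together with the gap \eqref{eq:gap}. Throughout, write
$$
s=\left(\frac{|u|}{|v|}\right)^{w_d}\le 1 .
$$

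\emph{Step 1 (containment of bodies).} Since $v\in\mathcal Q_\xi$, we have $|v|r(v)\le\xi$, so $K_v\subseteq K_\xi$. The map $D_{|u|/|v|}$ multiplies the $i$th coordinate by $(|u|/|v|)^{w_i}\le s$, because $|u|\le|v|$ and $w_i\ge w_d$. A coordinatewise contraction by factors at most $s$ maps a symmetric box into $s$ times itself. Hence
$$
D_{|u|/|v|}(\nu K_v)\subseteq s\nu K_\xi \quad\text{for every }\nu>0 .
$$

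\emph{Step 2 (dimension count).} Put $k=m_u+j$ for \eqref{eq:minima-many}, and $k=m_u+1$ with $\nu_m$ in place of $\nu_k$ for \eqref{eq:minima-last}. Suppose, for contradiction, that $\nu_k<\tfrac12 s^{-1}\lambda_k(u)$. Choose $\nu_k<\nu<\tfrac12 s^{-1}\lambda_k(u)$.

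For \eqref{eq:minima-many}, take $k$ independent vectors $g_1,\dots,g_k\in\Gamma_v\cap W\cap \nu K_v$ and put $y_i=D_{|u|/|v|}g_i$. These span $D_{|u|/|v|}W$, which has dimension $k$. Since $\dim(D W\cap V_u)\le m_u$, we get
$$
\dim\bigl(\operatorname{span}(y_i)+V_u\bigr)\ge k+m_u-m_u=k .
$$

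For \eqref{eq:minima-last}, use all $m$ vectors $g_1,\dots,g_m$ in $\nu_m K_v$. They span $W$, and $D_{|u|/|v|}W\not\subseteq V_u$ by Lemma~\ref{lem:quotient-identification}, since $W\not\subseteq V_{u,v}$. So the $y_i$ together with $V_u$ span a space of dimension at least $m_u+1$.

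In both cases each $y_i$ lies in $s\nu K_\xi$, and $s\nu<\tfrac12\lambda_k(u)$.

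\emph{Step 3 (correction into $\Gamma_u$).} By \eqref{eq:lattice-mod-V}, write $y_i=\gamma_i+z_i$ with $\gamma_i\in\Gamma_u$ and $z_i\in V_u$. Choose independent vectors $e_1,\dots,e_{m_u}\in\Gamma_u\cap L_uK_\xi$ spanning $V_u$; they span a full-rank sublattice of $\Gamma_u\cap V_u$. Write $z_i$ in the basis $(e_l)$ and round the coefficients of $z_i$ in that basis, which modifies $\gamma_i$ by an element of that sublattice. After this, $z_i$ lies in the half-open parallelepiped $\sum_l[-\tfrac12,\tfrac12]e_l\subseteq \tfrac{m_u}{2}L_uK_\xi$. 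Hence $\gamma_i'=y_i-z_i'\in\Gamma_u$ lies in $(s\nu+dL_u)K_\xi$.

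The vectors $\gamma_i'$ and $e_l$ span $\operatorname{span}(y_i)+V_u$, which has dimension at least $k$. So we obtain $k$ independent lattice vectors in $(s\nu+dL_u)K_\xi$. By \eqref{eq:gap} and $R\ge4d$,
$$
dL_u\le\tfrac R4L_u<\tfrac14\lambda_{m_u+1}(u)\le\tfrac14\lambda_k(u),
$$
and therefore $s\nu+dL_u<\tfrac34\lambda_k(u)$. This contradicts the definition of $\lambda_k(u)$.

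\emph{Main obstacle.} The delicate point is Step 3: the $V_u$-components $z_i$ have to be controlled uniformly. This is exactly where the gap $\lambda_{m_u+1}(u)>RL_u$ and the choice $R\ge4d$ are used, because together they make the correction cost $dL_u$ negligible against $\lambda_k(u)$ for every $k>m_u$. The other thing to check is the dimension count in Step 2 for \eqref{eq:minima-last}. There one has to make sure that $W\not\subseteq V_{u,v}$ really yields a direction outside $V_u$ after rescaling, which follows from $D_{|u|/|v|}V_{u,v}=V_u$.
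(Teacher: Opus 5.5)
Your proof is correct and follows the paper's argument essentially step for step. You push short vectors of $\Gamma_v\cap W$ forward by $D_{|u|/|v|}$, which contracts by $(|u|/|v|)^{w_d}$; you use \eqref{eq:lattice-mod-V} and rounding in an $L_u$-short basis of $V_u$ to get vectors of $\Gamma_u$ with error in $\frac{m_uL_u}{2}K_\xi$; and you absorb that error with the gap $\lambda_{m_u+1}(u)>RL_u$ and $R\ge 4d$. The paper argues directly rather than by contradiction, which changes nothing.

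One small wording slip in Step~2: when $k<m$, the $k$ vectors $y_i$ span only a $k$-dimensional subspace of $D_{|u|/|v|}W$, not all of it. Your count $\dim(\operatorname{span}(y_i)+V_u)\ge k+m_u-m_u$ remains valid with $\operatorname{span}(y_i)$ in place of $D_{|u|/|v|}W$.
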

\begin{proof}
Choose independent vectors
$$
\gamma_1,\ldots,\gamma_{m_u}\in\Gamma_u\cap L_uK_\xi
$$
spanning $V_u$.  By \eqref{eq:lattice-mod-V}, for every
$\gamma\in\Gamma_v$ there is $\gamma_0\in\Gamma_u$ such that
$$
\gamma_0-D_{|u|/|v|}\gamma\in V_u.
$$
Writing this difference in the basis $\gamma_1,\ldots,\gamma_{m_u}$ and reducing
the coefficients modulo $\Z$, we obtain $\widetilde\gamma\in\Gamma_u$
such that
\begin{equation}\label{eq:reduced-lift}
\widetilde\gamma-D_{|u|/|v|}\gamma
\in\frac{m_uL_u}{2}K_\xi.
\end{equation}

Take $m$ independent vectors in
$$
(\Gamma_v\cap W)\cap\nu_mK_v.
$$
Since they span $W$ and $W\not\subset V_{u,v}$, one of them, say
$\gamma$, lies outside $V_{u,v}$.  Since $|v|r(v)\le\xi$,
$$
D_{|u|/|v|}\gamma
\in
\nu_m\left(\frac{|u|}{|v|}\right)^{w_d}K_\xi.
$$
Together with \eqref{eq:reduced-lift}, this gives
$$
\widetilde\gamma
\in
\left(
\nu_m\left(\frac{|u|}{|v|}\right)^{w_d}
+\frac{m_uL_u}{2}
\right)K_\xi.
$$
Lemma~\ref{lem:quotient-identification} gives
$\widetilde\gamma\notin V_u$, so
$\gamma_1,\ldots,\gamma_{m_u},\widetilde\gamma$ are linearly independent.  Hence
$$
 \lambda_{m_u+1}(u)
 \le\max\left\{L_u,
 \nu_m\left(\frac{|u|}{|v|}\right)^{w_d}
 +\frac{m_uL_u}{2}\right\}.
$$
As $\lambda_{m_u+1}(u)>RL_u>L_u$, it follows that
$$
\lambda_{m_u+1}(u)
\le
\nu_m\left(\frac{|u|}{|v|}\right)^{w_d}
+\frac{m_uL_u}{2}.
$$
Since $\lambda_{m_u+1}(u)>RL_u$ and $R\ge4d$,
$$
\frac{m_uL_u}{2}
\le\frac12\lambda_{m_u+1}(u),
$$
and therefore
$$
\nu_m
\ge
\frac12
\left(\frac{|v|}{|u|}\right)^{w_d}
\lambda_{m_u+1}(u).
$$

Now suppose that $m_u+j\le m$.  Choose $m_u+j$ independent vectors in
$$
(\Gamma_v\cap W)\cap\nu_{m_u+j}K_v.
$$
Their images in $\R^d/V_{u,v}$ span a space of dimension at least $j$.
Applying \eqref{eq:reduced-lift} to $j$ vectors with linearly independent images and using Lemma~\ref{lem:quotient-identification}, we see that their reduced
lifts have linearly independent images modulo $V_u$.  Hence, adjoining
$\gamma_1,\ldots,\gamma_{m_u}$,
$$
 \lambda_{m_u+j}(u)
 \le\max\left\{L_u,
 \nu_{m_u+j}\left(\frac{|u|}{|v|}\right)^{w_d}
 +\frac{m_uL_u}{2}\right\}.
$$
Since $\lambda_{m_u+j}(u)\ge\lambda_{m_u+1}(u)>RL_u>L_u$,
we obtain
$$
\lambda_{m_u+j}(u)
\le
\nu_{m_u+j}
\left(\frac{|u|}{|v|}\right)^{w_d}
+\frac{m_uL_u}{2}.
$$
The same argument gives
$$
\nu_{m_u+j}
\ge
\frac12
\left(\frac{|v|}{|u|}\right)^{w_d}
\lambda_{m_u+j}(u).
$$
\end{proof}
\subsection{Counting}

We use the following consequence of the lattice-counting lemma of Liao--Shi--Solan--Tamam \cite[Lemma~3.3]{LSST}.

\begin{lemma}\label{lem:lattice-count}
Let $W$ be an $m$-dimensional real vector space, let $\Gamma$ be a lattice
in $W$, and let $K\subset W$ be a bounded centrally symmetric convex body
with nonempty interior.  If $K\cap\Gamma$ spans $W$, then
$$
 \#(K\cap\Gamma)
 \ll_m \frac{\vol_W(K)}{\covol_W(\Gamma)}
 \asymp_m
 \frac{1}{\lambda_1(K,\Gamma)\cdots\lambda_m(K,\Gamma)}.
$$
\end{lemma}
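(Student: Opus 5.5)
The plan is to reduce to the standard case $W=\R^m$ and $K$ a unit-type body, using Minkowski's second theorem together with a packing argument. Both $\#(K\cap\Gamma)$ and the quantities on the right are invariant under a linear isomorphism $W\to\R^m$, since volumes and covolumes scale by the same determinant. So I may assume $W=\R^m$.

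\textbf{Comparison with the product of minima.} Minkowski's second theorem gives
$$
\frac{2^m}{m!}\le \lambda_1\cdots\lambda_m\,\frac{\vol(K)}{\covol(\Gamma)}\le 2^m ,
$$
which is the asymptotic equivalence $\asymp_m$. For the counting bound, I use that $K\cap\Gamma$ spans $W$, so $\lambda_m(K,\Gamma)\le1$ and hence every $\lambda_i\le1$.

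\textbf{Packing argument.} The translates $\gamma+\tfrac12K$, for $\gamma\in\Gamma\cap K$, need not be disjoint, since $K$ may contain many lattice points close together. I therefore use a fundamental domain instead. Choose independent vectors $b_1,\dots,b_m\in\Gamma\cap K$; these exist by the spanning hypothesis. Let $P=\{\sum t_ib_i: t_i\in[0,1)\}$. Then $P\subset mK$, and $P$ is a fundamental domain for the sublattice $\Gamma'=\bigoplus\Z b_i$.

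To count points of $\Gamma$ rather than $\Gamma'$, I instead take a fundamental domain $F$ of $\Gamma$ itself contained in $P$. Such an $F$ can be obtained from the Hermite normal form relative to $b_i$, or more simply from the cells $P\cap(\gamma+P_0)$ of the reduction. Since $F\subset mK$, the sets $\gamma+F$ for $\gamma\in\Gamma\cap K$ are pairwise disjoint and contained in $(m+1)K$. Each has volume $\covol(\Gamma)$. Hence
$$
\#(K\cap\Gamma)\,\covol(\Gamma)\le (m+1)^m\vol(K),
$$
which is the claimed bound $\#(K\cap\Gamma)\ll_m \vol_W(K)/\covol_W(\Gamma)$.

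\textbf{Main obstacle.} The one delicate point is producing a fundamental domain of $\Gamma$ inside a bounded dilate of $K$, since $\Gamma'$ may have large index. My first attempt uses the Voronoi-type domain $F=\{y:\ y=\sum t_ib_i,\ |t_i|\le\tfrac12\}$ reduced modulo $\Gamma$. Concretely, every $y\in W$ is congruent modulo $\Gamma'\subset\Gamma$ to a point of $\tfrac{m}{2}K$ (round the coordinates $t_i$). Therefore the set $\{y\in\tfrac m2K\}$ surjects onto $W/\Gamma$, so some measurable $F\subset\tfrac m2 K$ is a fundamental domain for $\Gamma$.

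Alternatively, I can simply cite Liao--Shi--Solan--Tamam \cite[Lemma~3.3]{LSST}, which the paper indicates is the source of this statement, and combine it with Minkowski's theorem as above.
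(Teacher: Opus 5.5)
Your proof is correct. It matches the paper on the equivalence $\vol_W(K)/\covol_W(\Gamma)\asymp_m(\lambda_1\cdots\lambda_m)^{-1}$, which both obtain from Minkowski's second theorem. It differs on the counting bound. The paper identifies $W$ with $\R^m$, notes only that the spanning hypothesis gives $\lambda_m(K,\Gamma)\le1$, and then cites Lemma~3.3 of Liao--Shi--Solan--Tamam.

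You instead prove the counting bound directly. The spanning hypothesis gives a full-rank sublattice $\Gamma'=\bigoplus\Z b_i$ with $b_i\in K$. Hence every class of $W/\Gamma$ has a representative in $\tfrac m2K$ (or in $P\subset mK$). So $\Gamma$ has a fundamental domain $F$ inside a bounded dilate of $K$, and the disjoint translates $\gamma+F$, $\gamma\in\Gamma\cap K$, pack into $(1+\tfrac m2)K$.

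Your route gives a self-contained argument with an explicit constant $(1+m/2)^m$. It also shows exactly where the spanning hypothesis enters: without it, a thin body containing many collinear lattice points defeats the bound. The citation only buys brevity.

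Two small points:
\begin{itemize}
\item The step \emph{a bounded measurable set surjecting onto $W/\Gamma$ contains a measurable fundamental domain} deserves a line of justification. Intersect with finitely many translates of a half-open basis parallelepiped of $\Gamma$ and discard overlaps in a fixed order.
\item Alternatively, you can avoid fundamental domains of $\Gamma$ altogether. For $\gamma\in\Gamma\cap K$ the translates $\gamma+P$ lie in $(m+1)K$. Each point $y$ lies in at most $[\Gamma:\Gamma']=\vol(P)/\covol(\Gamma)$ of them, since two $\gamma$ in the same $\Gamma'$-coset cannot both satisfy $y-\gamma\in P$. Integrating the sum of the indicator functions then gives $\#(K\cap\Gamma)\le(m+1)^m\vol(K)/\covol(\Gamma)$ at once.
\end{itemize}

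Your two constructions, inside $P$ and inside $\tfrac m2K$, are the same idea; one suffices.
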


\begin{proof}
Since $K\cap\Gamma$ contains $m$ linearly independent vectors, one has $\lambda_m(K,\Gamma)\le1$.
After identifying $W$ with $\R^m$, the first estimate follows from
\cite[Lemma~3.3]{LSST}, and the second follows from Minkowski's second
theorem.
\end{proof}

For $u\in\mathcal Q_{\xi}$, set
$$
 D(u,\xi)
 =\left\{v\in H_u\cap\mathcal Q_{\xi}:
 |v|\ge |u|,\quad
 A(\widehat v,u)\le c_{\wt}^2r(u)\right\},
$$
and for $k\ge1$, put
$$
 D_k(u,\xi)
 =\{v\in D(u,\xi):k|u|\le |v|<(k+1)|u|\}.
$$
Let
$$
 K_u=B_{\wt}(c_{\wt}^2|u|r(u)).
$$
As in \eqref{eq:lambda1-lower},
$\lambda_1(K_u,\Gamma_u)\ge c_{\wt}^{-2w_1}$.
Since $u$ is primitive, $\pi_u$ induces a bijection between
$$
\{v\in\Z^{d+1}:k|u|\le |v|<(k+1)|u|\}
$$
and $\Lambda_u$.  For $v\in D_k(u,\xi)$,
$$
D_{|u|}\pi_u(v)\in (k+1)K_u\cap V_u.
$$
Let $W$ be the real span of these projected vectors and put
$m=\dim W\le m_u$.  If $m=0$, then $\#D_k(u,\xi)\le1$.  Otherwise the
projected vectors span $W$ and lie in
$$
(k+1)(K_u\cap W).
$$
Moreover,
$$
\lambda_i(K_u\cap W,\Gamma_u\cap W)
\ge c_{\wt}^{-2w_1}
\qquad(1\le i\le m).
$$
Lemma~\ref{lem:lattice-count} therefore gives
\begin{equation}\label{eq:v-count}
\#D_k(u,\xi)\ll_{d,\wt}k^m\le k^{m_u}.
\end{equation}
For $v\in D(u,\xi)$, define
$$
 E(u,v,\xi)
 =\left\{w\in\mathcal Q_{\xi}\setminus H_u:
 |w|>|v|,\quad
 A(\widehat w,u)\le c_{\wt}^2r(u),\quad
 A(\widehat w,v)\le
 \min\left\{c_{\wt}^2r(v),\frac{\xi}{|w|}\right\}
 \right\}.
$$
For $\ell\ge1$, put
$$
 E_{\ell}(u,v,\xi)
 =\{w\in E(u,v,\xi):
 \ell|v|\le |w|<(\ell+1)|v|\}.
$$
For $w\in E_{\ell}(u,v,\xi)$, set $y=D_{|v|}\pi_v(w)$.  Coordinatewise,
$$
 |y_i|
 \le
 \min\left\{
  (\ell+1)(c_{\wt}^2|v|r(v))^{w_i},
  \xi^{w_i}(\ell+1)^{1-w_i}
 \right\}.
$$
Hence $y$ lies in the box
$$
 \mathcal P_\ell=\prod_{i=1}^d[-b_i,b_i],
 \qquad
 b_i=
 \min\left\{
  (\ell+1)(c_{\wt}^2|v|r(v))^{w_i},
  \xi^{w_i}(\ell+1)^{1-w_i}
 \right\},
$$
for which
\begin{equation}\label{eq:exit-box-volume}
 \vol(\mathcal P_\ell)\ll_d\xi\ell^{d-1},
 \qquad
 \mathcal P_\ell\subset (\ell+1)K_v.
\end{equation}
Since $v\in H_u$ and $\ker\pi_v=\mathbb Rv$, one has
$$
\pi_v^{-1}\bigl(\pi_v(H_u)\bigr)=H_u.
$$
Thus $w\notin H_u$ implies
$$
y=D_{|v|}\pi_v(w)\notin V_{u,v}.
$$
Moreover, since $v$ is primitive, $\pi_v$ is injective on the level interval
$$
\ell|v|\le |w|<(\ell+1)|v|.
$$
Hence the map
$$
w\longmapsto D_{|v|}\pi_v(w)
$$
is injective on $E_\ell(u,v,\xi)$.

\begin{proposition}\label{prop:exit-count}
Let $N_{u,v}(\ell)=\#E_{\ell}(u,v,\xi)$ and put
$$
 k=\left\lfloor\frac{|v|}{|u|}\right\rfloor.
$$
If $m_u=d-1$, then
$$
 N_{u,v}(\ell)
 \ll_{d,\wt}
 \left(
  \xi+\frac{1}{k^{w_d}\lambda_d(u)}
 \right)\ell^{d-1}.
$$
If $m_u\le d-2$, then
\begin{equation}\label{eq:exit-low-rank}
 \begin{split}
 N_{u,v}(\ell)
 \ll_{d,\wt}
 \left(
  \xi+\frac{1}{k^{(d-1-m_u)w_d}\prod_{i=m_u+1}^{d-1}\lambda_i(u)}
 \right)\ell^{d-1}+\frac{\ell^{d-2}}
 {k^{w_d}\lambda_{m_u+1}(u)}.
 \end{split}
\end{equation}
\end{proposition}

\begin{proof}
Let $W$ be the real span of
$\{D_{|v|}\pi_v(w):w\in E_\ell(u,v,\xi)\}$ and put $m=\dim W$. If $m=d$, then
$$
\lambda_d(\mathcal P_\ell,\Gamma_v)\le1,
$$
so Lemma~\ref{lem:lattice-count}, together with
\eqref{eq:exit-box-volume}, gives
$$
N_{u,v}(\ell)\ll_d\xi\ell^{d-1}.
$$

Suppose $m<d$.  If $m=0$, then $N_{u,v}(\ell)=0$, so there is
nothing to prove.  Assume henceforth that $1\le m<d$.  Then
$\Gamma_v\cap W$ is a lattice in $W$, $W\not\subset V_{u,v}$, and
$$
\nu_i=\lambda_i(K_v\cap W,\Gamma_v\cap W)
\gg_{\wt}1.
$$
Since the projected exit vectors span $W$ and lie in
$(\ell+1)(K_v\cap W)$, Lemma~\ref{lem:lattice-count} gives
$$
N_{u,v}(\ell)
\ll_d
\frac{\ell^m}{\nu_1\cdots\nu_m}.
$$

If $m_u=d-1$, then \eqref{eq:minima-last} gives
$$
\nu_m\gg
k^{w_d}\lambda_d(u).
$$
Since $m\le d-1$, it follows that
$$
N_{u,v}(\ell)
\ll_{d,\wt}
\frac{\ell^{d-1}}
{k^{w_d}\lambda_d(u)}.
$$

Now assume $m_u\le d-2$. If $m=d-1$, then
\eqref{eq:minima-many} gives
$$
\prod_{j=1}^{d-1-m_u}\nu_{m_u+j}
\gg_{d,\wt}
k^{(d-1-m_u)w_d}
\prod_{i=m_u+1}^{d-1}\lambda_i(u),
$$
and hence
$$
N_{u,v}(\ell)
\ll_{d,\wt}
\frac{\ell^{d-1}}
{k^{(d-1-m_u)w_d}
\prod_{i=m_u+1}^{d-1}\lambda_i(u)}.
$$
If $m\le d-2$, then \eqref{eq:minima-last} gives
$$
\nu_m\gg
k^{w_d}\lambda_{m_u+1}(u),
$$
and therefore
$$
N_{u,v}(\ell)
\ll_{d,\wt}
\frac{\ell^{d-2}}
{k^{w_d}\lambda_{m_u+1}(u)}.
$$
Combining the three cases proves the proposition.
\end{proof}
\section{Hausdorff upper bound}\label{sec:upper}

The acceleration of Section~\ref{sec:best} is defined along
individual best-approximation sequences.
Following the covering constructions of Cheung \cite{Che11},
Cheung--Chevallier \cite{CC16}, and Liao--Shi--Solan--Tamam
\cite[Lemma~2.8 and Sections~5.1--5.2]{LSST},
we enlarge these transitions to a relation on $\mathcal Q_\xi$.
The counting estimates of Section~\ref{sec:counting} then give
a contracting successor sum, which we use to prove
Theorem~\ref{thm:DI} by a self-affine covering argument.

With the sets $D(u,\xi)$ and $E(u,v,\xi)$ from
Section~\ref{sec:counting}, set
$$
 \sigma_{\xi}(u)=\bigcup_{v\in D(u,\xi)}E(u,v,\xi).
$$
For every sufficiently late accelerated step associated with a
point of $\DI_{\wt}^*(\eps)$, the intermediate best approximation $v$
belongs to $D(u,\xi)$ and $w\in E(u,v,\xi)$.  Hence
$w\in\sigma_{\xi}(u)$.  By Lemma~\ref{lem:absorb},
\begin{equation}\label{eq:edge-growth}
 w\in\sigma_{\xi}(u)
 \quad\Longrightarrow\quad
 |w|>RL_u|u|\ge R|u|.
\end{equation}

\begin{proposition}\label{prop:sum}
There are constants $C_0=C_0(d,\wt)>0$ and $C_1=C_1(d,\wt,R)>0$ such that,
for every $0<\delta\le1/2$,
\begin{equation}\label{eq:sum-bound}
 \sup_{u\in\mathcal Q_{\xi}}
 \sum_{w\in\sigma_{\xi}(u)}
 \left(\frac{|u|}{|w|}\right)^{d+\delta}
 \le
 \frac{C_0}{R}
 +\frac{C_1\sqrt\xi}{\delta}
 +\frac{C_0\xi}{\delta^2}.
\end{equation}
The constant $C_0$ is independent of $R$.
\end{proposition}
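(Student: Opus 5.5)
Fix $u\in\mathcal Q_\xi$. We bound the sum by decomposing $\sigma_\xi(u)$ according to the intermediate $v\in D(u,\xi)$ and the level blocks. By the union bound,
$$
 \sum_{w\in\sigma_\xi(u)}\Bigl(\frac{|u|}{|w|}\Bigr)^{d+\delta}
 \le\sum_{k\ge1}\sum_{v\in D_k(u,\xi)}\sum_{\ell\ge1}N_{u,v}(\ell)\,
 \sup_{w\in E_\ell(u,v,\xi)}\Bigl(\frac{|u|}{|w|}\Bigr)^{d+\delta}.
$$
For $w\in E_\ell(u,v,\xi)$ we have $|w|\ge\ell|v|\ge k\ell|u|$, so the weight is at most $(k\ell)^{-d-\delta}$. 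By \eqref{eq:edge-growth}, only pairs with $k\ell\gg RL_u$ contribute, more precisely $(k+1)(\ell+1)>RL_u$. Combining with \eqref{eq:v-count}, $\#D_k\ll k^{m_u}$, and Proposition~\ref{prop:exit-count}, the sum is bounded by a finite number of double series of the form $\sum_{k,\ell}k^{m_u}(k\ell)^{-d-\delta}N(\ell)$. We treat the terms of Proposition~\ref{prop:exit-count} separately.

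\emph{The $\xi$ term.} This contributes $\xi\sum_k k^{m_u-d-\delta}\sum_\ell\ell^{-1-\delta}$. Since $m_u\le d-1$, each factor is $\ll\delta^{-1}$, giving the $C_0\xi/\delta^2$ term.

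\emph{The transverse term when $m_u=d-1$.} This gives
$$
 \frac1{\lambda_d(u)}\sum_{k,\ell}k^{-1-w_d-\delta}\ell^{-1-\delta}.
$$
Here we must use the constraint $k\ell\gtrsim RL_u$ carefully. Without it the $\ell$-sum alone gives $\delta^{-1}$, and $\lambda_d(u)^{-1}\le\xi^{1/2}$ by \eqref{eq:last-minimum-large}. So this piece is $\ll\sqrt\xi/(w_d\delta)$, which is absorbed into $C_1\sqrt\xi/\delta$. One might instead hope to get a $1/R$ bound here using $\lambda_d(u)>RL_u$ with $L_u\ge1$. Either way the term is fine, since $\lambda_d(u)\ge\xi^{-1/2}$ is guaranteed.

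\emph{The low-rank terms when $m_u\le d-2$.}
\begin{itemize}
\item The $\ell^{d-1}$ term carries $k^{m_u-d-\delta-(d-1-m_u)w_d}\ell^{-1-\delta}$ divided by $\prod_{i=m_u+1}^{d-1}\lambda_i(u)$. By \eqref{eq:transverse-product} this product is $\gg\xi^{-1/2}$, giving $C_1\sqrt\xi/\delta$; the constant depends on $R$ through that lemma.
\item The $\ell^{d-2}$ term gives
$$
 \frac{1}{\lambda_{m_u+1}(u)}\sum_{k,\ell}k^{m_u-d-\delta-w_d}\ell^{-2-\delta}.
$$
Both series converge uniformly in $\delta$, and $\lambda_{m_u+1}(u)>RL_u\ge R$. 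This yields $C_0/R$ with $C_0$ depending only on $d,\wt$.
\end{itemize}

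\emph{Main obstacle.} The hardest point is ensuring that every contribution not carrying a factor $\xi$ or $\sqrt\xi$ carries $1/R$ with a constant independent of $R$. The $k$-sum from \eqref{eq:v-count} is only barely convergent: the exponent $m_u-d-\delta\le-1-\delta$ produces a $\delta^{-1}$ in the $m_u=d-1$ case. That case must therefore borrow the $\sqrt\xi$ from \eqref{eq:last-minimum-large} rather than from $1/R$. I would first check that the $k$-exponent gains the extra $w_d$ from the comparison factor $k^{w_d}$, which makes the $k$-sum converge independently of $\delta$. I would then verify that the implied constants from Lemma~\ref{lem:lattice-count} and Lemma~\ref{lem:minima-comparison} involve only $d,\wt$, so that $C_0$ is independent of $R$ as the statement requires.
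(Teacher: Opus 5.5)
Your proposal is correct and follows essentially the same route as the paper. It decomposes by the witness $v\in D(u,\xi)$ and the level shells $k,\ell$, bounds the weight by $(k\ell)^{-d-\delta}$, and combines \eqref{eq:v-count} with Proposition~\ref{prop:exit-count}. It then controls the resulting terms via $\lambda_d(u)\ge\xi^{-1/2}$, \eqref{eq:transverse-product} and $\lambda_{m_u+1}(u)>R$; as you found, the edge-growth constraint is never needed.

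One small correction to your asides. In the $m_u=d-1$ case the $\delta^{-1}$ in the transverse term comes from the $\ell$-sum, not the $k$-sum. So the alternative ``$1/R$'' bound there would only give $1/(R\delta)$, and that is not admissible. The $\sqrt\xi$ route you actually use is the necessary one.
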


\begin{proof}
For fixed $u$, denote the sum on the left-hand side of \eqref{eq:sum-bound} by
$S_u(\delta)$.  We estimate $S_u(\delta)$ by summing over all witnesses
$v\in D(u,\xi)$; this can only overcount the elements of
$\sigma_{\xi}(u)$.  For $w\in E(u,v,\xi)$, put
$$
 k=\left\lfloor\frac{|v|}{|u|}\right\rfloor,
 \qquad
 \ell=\left\lfloor\frac{|w|}{|v|}\right\rfloor.
$$
Then $|w|/|u|\ge k\ell$.  By \eqref{eq:v-count}, there are
$O_{d,\wt}(k^{m_u})$ possible witnesses $v$ in the $k$th shell, and
Proposition~\ref{prop:exit-count} applies with the same $k$.

Suppose first that $m_u=d-1$.  Proposition~\ref{prop:exit-count} gives
\begin{align*}
 S_u(\delta)
 \ll_{d,\wt}
 \xi\sum_{k,\ell\ge1}k^{-1-\delta}\ell^{-1-\delta}+
 \frac1{\lambda_d(u)}
 \sum_{k,\ell\ge1}
 k^{-1-\delta-w_d}\ell^{-1-\delta}.
\end{align*}
Since $w_d>0$ is fixed,
$$
 \sum_{k\ge1}k^{-1-\delta-w_d}\ll_{\wt}1,
 \qquad
 \sum_{k\ge1}k^{-1-\delta}\ll\delta^{-1}.
$$
Using \eqref{eq:last-minimum-large},
\begin{equation}\label{eq:sum-high}
 S_u(\delta)
 \ll_{d,\wt}
 \frac{\xi}{\delta^2}
 +\frac{\sqrt\xi}{\delta}.
\end{equation}

Now let $m_u\le d-2$.  Combining \eqref{eq:v-count} with
\eqref{eq:exit-low-rank}, the three $k$-exponents are
$$
 m_u-d-\delta,\qquad
 m_u-d-\delta-(d-1-m_u)w_d,\qquad
 m_u-d-\delta-w_d,
$$
and hence are at most $-2-\delta$.  Thus all three $k$-sums are bounded
uniformly, while
$$
 \sum_{\ell\ge1}\ell^{-1-\delta}\ll\delta^{-1},
 \qquad
 \sum_{\ell\ge1}\ell^{-2-\delta}\ll1.
$$
Therefore
$$
 S_u(\delta)
 \ll_{d,\wt}
 \frac{\xi}{\delta}
 +\frac1{\prod_{i=m_u+1}^{d-1}\lambda_i(u)\delta}
 +\frac1{\lambda_{m_u+1}(u)}.
$$
By \eqref{eq:gap} and \eqref{eq:transverse-product},
$$
 \lambda_{m_u+1}(u)>RL_u\ge R,
 \qquad
 \left(\prod_{i=m_u+1}^{d-1}\lambda_i(u)\right)^{-1}\ll_{d,\wt,R}\sqrt\xi.
$$
Thus
$$
 S_u(\delta)
 \le
 \frac{C_0}R
 +\frac{C_1\sqrt\xi}{\delta}
 +C_0\frac{\xi}{\delta}.
$$
Since $\delta\le1$, the last term is bounded by a constant multiple of
$\xi/\delta^2$.  The constants multiplying $R^{-1}$ and
$\xi/\delta^2$ depend only on $d$ and $\wt$; all dependence on $R$ in
the transverse-product estimate is absorbed into $C_1$.  Together with
\eqref{eq:sum-high}, this proves \eqref{eq:sum-bound}.
\end{proof}

Fix $R=R(d,\wt)\ge4d$ so that $C_0/R<1/8$.  For this $R$, fix
$M=M(d,\wt,R)$ such that
$$
 \frac{C_1}{M}+\frac{C_0}{M^2}<\frac14.
$$
Set
\begin{equation}\label{eq:delta-choice}
 \delta=M\sqrt\xi.
\end{equation}
For sufficiently small $\xi$, one has $\delta\le1/2$, and
Proposition~\ref{prop:sum} yields
\begin{equation}\label{eq:sum-contraction}
 \sup_{u\in\mathcal Q_{\xi}}
 \sum_{w\in\sigma_{\xi}(u)}
 \left(\frac{|u|}{|w|}\right)^{d+\delta}<1.
\end{equation}

Recall the rectangle $\beta(u)$ from \eqref{eq:beta}.  For $d-1<s<d$,
cover $\beta(u)$ by Euclidean cubes of side
$|u|^{-1-w_1}$.  Since $\sum_iw_i=1$, the number of cubes is
$O_d(|u|^{dw_1-1})$, and their total $s$-cost is
\begin{equation}\label{eq:rectangle-cost}
 \ll_{d,s}
 |u|^{-d-(1+w_1)(s-\sstar)}.
\end{equation}

The covering criterion below is the $d$-dimensional form of the
self-affine argument of Liao--Shi--Solan--Tamam
\cite[Lemma~2.8]{LSST}.

\begin{lemma}\label{lem:tree}
Let $s\in(d-1,d)$.  Suppose that a successor relation on $\mathcal Q_{\xi}$
satisfies $|w|\ge R|u|$ on every edge and
$$
 \sup_u
 \sum_{w\in\sigma_{\xi}(u)}
 \left(\frac{|u|}{|w|}\right)^{d+(1+w_1)(s-\sstar)}<1.
$$
Then the set of points contained in the rectangles $\beta(u)$ along an
infinite $\sigma_{\xi}$-path has Hausdorff dimension at most $s$.
\end{lemma}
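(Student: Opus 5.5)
The plan is the standard tree-covering argument with a weight function that decreases along edges. Set $t=(1+w_1)(s-\sstar)$ and
$$\kappa=\sup_u\sum_{w\in\sigma_\xi(u)}(|u|/|w|)^{d+t}<1.$$
The rectangle-cost estimate \eqref{eq:rectangle-cost} says that $\beta(u)$ can be covered by Euclidean cubes of side $|u|^{-1-w_1}$ whose total $s$-cost is $\ll_{d,s}|u|^{-d-t}$. So it suffices to cover the limit set, at arbitrarily small scales, by rectangles $\beta(u)$ whose weights $|u|^{-(d+t)}$ have bounded total sum.

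\textbf{Step 1 (generations).} Fix a starting vertex $u_0\in\mathcal Q_\xi$. Let $\mathcal T_n(u_0)$ be the set of endpoints of $\sigma_\xi$-paths of length $n$ starting at $u_0$, counted with multiplicity over paths. Summing over paths and applying the hypothesis at each vertex, by induction on $n$,
$$\sum_{\text{paths }u_0\to\cdots\to u_n}|u_n|^{-(d+t)}\le \kappa^n|u_0|^{-(d+t)}.$$
The ratios telescope: $(|u_0|/|u_n|)^{d+t}=\prod_{i}(|u_{i-1}|/|u_i|)^{d+t}$. Along every path $|u_n|\ge R^n|u_0|$. Since each $\beta(u_n)$ has diameter $\ll|u_n|^{-1-w_d}$, the covering scale tends to $0$ uniformly in $n$.

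\textbf{Step 2 (covering).} Any point lying in the rectangles $\beta(u)$ along an infinite path $u_0,u_1,\ldots$ lies in $\beta(u_n)$ for each $n$. The limit set $E$ is the union over the countably many starting vertices $u_0$ of the sets $E(u_0)$ of points along paths from $u_0$. By countable stability of Hausdorff dimension, it suffices to bound $\dim_H E(u_0)$ for each fixed $u_0$.

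For every $n$, cover $E(u_0)$ by the cubes covering $\beta(u_n)$, over all length-$n$ paths. The total $s$-cost is
$$\ll_{d,s}\sum|u_n|^{-d-t}\le\kappa^n|u_0|^{-d-t}\to0,$$
at a maximal scale of at most $(R^n|u_0|)^{-1-w_1}\to0$. Hence $\mathcal H^s(E(u_0))=0$. In fact this is better than needed: finiteness of the cost alone would give $\dim\le s$.

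\textbf{Step 3 (cube count in \eqref{eq:rectangle-cost}).} I verify this estimate, since it is where the self-affinity enters. The number of cubes is $\prod_i\max\{1,|u|^{-1-w_i}/|u|^{-1-w_1}\}=|u|^{\sum_i(w_1-w_i)}=|u|^{dw_1-1}$. Each cube has $s$-cost $|u|^{-(1+w_1)s}$. The exponent is
$$dw_1-1-(1+w_1)s=-(1+w_1)(s-\sstar)-d,$$
using $(1+w_1)\sstar=d(1+w_1)-1$. This needs no restriction on $s$ beyond $s>d-1$.

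\textbf{Main obstacle.} The main subtlety is that cubes of a single side length may be suboptimal for intermediate $s$. That does not matter here, since only an upper bound is needed and this particular covering already yields exponent $d+t$, matching the hypothesis.
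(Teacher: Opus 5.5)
Your proposal is correct and follows the paper's proof essentially step for step. You fix $u_0$, unfold $\sigma_\xi$ into paths and induct to get the $\kappa^n|u_0|^{-d-(1+w_1)(s-\sstar)}$ bound at depth $n$. You then use $|u_n|\ge R^n|u_0|$ for vanishing cube diameters, combine this with the rectangle-cost estimate \eqref{eq:rectangle-cost} to get $\mathcal H^s=0$, and finish with a countable union over $u_0$. Your Step~3 explicitly verifies the cube count and exponent identity behind \eqref{eq:rectangle-cost}, which the paper states without proof.
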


\begin{proof}
Fix $u_0\in\mathcal Q_{\xi}$ and let
$$
 \theta=
 \sup_{u\in\mathcal Q_{\xi}}
 \sum_{w\in\sigma_{\xi}(u)}
 \left(\frac{|u|}{|w|}\right)^{d+(1+w_1)(s-\sstar)}<1.
$$
Unfolding the successor relation into finite admissible paths, the sum of
$$
 |u|^{-d-(1+w_1)(s-\sstar)}
$$
over the vertices at depth $n$ is at most
$$
 \theta^n|u_0|^{-d-(1+w_1)(s-\sstar)}.
$$
By \eqref{eq:edge-growth}, the diameters of the corresponding cube covers
tend to zero.  Together with \eqref{eq:rectangle-cost}, this gives zero
$s$-dimensional Hausdorff measure.  Taking the countable union over $u_0$
proves the claim.
\end{proof}

\begin{proof}[Proof of Theorem~\ref{thm:DI}]
Let $x\in\DI_{\wt}^*(\eps)$.  By
Lemma~\ref{lem:DI-records}, every sufficiently late best
approximation lies in $\mathcal Q_{\xi}$, with $\xi=c_{\wt}\eps$, and its
rectangle contains $x$.  The acceleration of Section~\ref{sec:best} therefore gives an infinite
$\sigma_{\xi}$-path consisting of best approximations.

Let $\delta$ be given by \eqref{eq:delta-choice} and set
$$
 s=\sstar+\frac{\delta}{1+w_1}.
$$
For sufficiently small $\eps$, one has $d-1<s<d$, and
$d+(1+w_1)(s-\sstar)=d+\delta$.  Thus
\eqref{eq:sum-contraction} and Lemma~\ref{lem:tree} give
$$
 \dim_H\bigl(
 \DI_{\wt}^*(\eps)
 \bigr)
 \le
 \sstar+\frac{M\sqrt{c_{\wt}}}{1+w_1}\sqrt\eps.
$$
The remaining points lie in a countable union of rational affine
hyperplanes, whose Hausdorff dimension is at most $d-1<\sstar$.  Thus
\eqref{eq:DI-main} holds with
$C_{d,\wt}=M\sqrt{c_{\wt}}/(1+w_1)$.
\end{proof}

\appendix

\section{A direct proof of the singular upper bound}\label{app:direct}

This appendix gives a shorter independent proof of the upper
bound in Theorem~\ref{thm:singular} using minimal vectors.
The argument is related to the cross-section approach to
Diophantine approximation in \cite{CC24,SW,YZ,AGcross}.
It does not yield the quantitative estimate of
Theorem~\ref{thm:DI}.

Fix $0<\eps<e^{-1}$.  For $T\ge1$ and $u\in\mathcal Q$, set
$$
 B_u(T)=\{x\in[0,1]^d:A(x,u)\le\eps/T\},
 \qquad
 U_\eps(T)=\bigcup_{\substack{u\in\mathcal Q\\ |u|\le T}}B_u(T).
$$
Primitive reduction shows that the same union is obtained if the primitivity condition is dropped. Hence every
$x\in\Sing(\wt)\cap[0,1]^d$ belongs to $U_\eps(T)$ for all sufficiently
large $T$.

\subsection{One-step counting}

A vector $u\in\mathcal Q$ is \emph{minimal at $(x,T)$} if
$x\in B_u(T)$, $|u|\le T$, and no $v\in\mathcal Q$ of smaller level has a box at
scale $T$ containing $x$.  For such $u$, define
$$
 \mathcal C_u(T)=
 \{v\in\mathcal Q:T<|v|\le T/\eps,\
 B_v(T/\eps)\cap B_u(T)\ne\varnothing\}.
$$

\begin{lemma}\label{app:lem:count}
There is a constant $C_d>0$ such that, whenever $u$ is minimal at
$(x,T)$ for some $x\in[0,1]^d$,
\begin{equation}\label{app:eq:count}
 \sum_{v\in\mathcal C_u(T)}|v|^{-d}
 \le C_d\bigl(1+\log(1/\eps)\bigr)|u|^{-d}.
\end{equation}
\end{lemma}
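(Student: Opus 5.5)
Dyadically decompose the levels of $v$. For $2^jT<|v|\le2^{j+1}T$ with $0\le j\le \log_2(1/\eps)$, it suffices to show
$$
 \#\{v\in\mathcal C_u(T):|v|\in(2^jT,2^{j+1}T]\}\ll_d (2^jT)^d|u|^{-d},
$$
that is, a count of order $(|v|/|u|)^d$ per dyadic shell. Summing over the $O(\log(1/\eps))$ shells then gives \eqref{app:eq:count}.

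To count the $v$ in a fixed shell, I would project by $\pi_u$ and rescale, as in the Farey-lattice setup of Section~\ref{sec:best}. The condition $B_v(T/\eps)\cap B_u(T)\neq\varnothing$ gives a point $x'$ with $A(x',u)\le\eps/T$ and $A(x',v)\le\eps^2/T$. The triangle-inequality computation of Lemma~\ref{lem:comparison} then bounds each coordinate of $|u|\widehat v_i-p_{u,i}$ by $\ll(\eps/T)^{w_i}$. Consequently $y=D_{|u|}\pi_u(v)$ lies in a box of the shape
$$
 \frac{|v|}{|u|}B_{\wt}\bigl(O(|u|\eps/T)\bigr)\subset\frac{|v|}{|u|}B_{\wt}(O(\eps)),
$$
using $|u|\le T$. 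Since $\pi_u$ is injective on each level window of length $|u|$, as noted before Proposition~\ref{prop:exit-count}, the $v$ in the shell inject into at most $O(2^jT/|u|)$ windows. Each window contributes lattice points of $\Gamma_u$ in a box of volume $\asymp(|v|/|u|)^d\eps$. The naive estimate is therefore (number of windows)$\times$(lattice points per box), and with $\covol(\Gamma_u)=1$ this is of order $(|v|/|u|)^{d+1}\eps$ whenever the box is "large" relative to $\Gamma_u$. Using the shell bound $|v|\le T/\eps$ turns this into roughly $(|v|/|u|)^d$, which is the desired shell count.

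\textbf{Main obstacle.} The naive volume count fails when the box is small compared with $\Gamma_u$, in which case Lemma~\ref{lem:lattice-count} does not apply because the points do not span. Minimality of $u$ is exactly what controls this. Minimality says no $u'$ of smaller level has $A(x,u')\le\eps/T$. I would use this to show that every nonzero $\gamma\in\Gamma_u$ has $\|\gamma\|_{\wt}\gg\eps|u|/T$, since a short $\gamma$ would lift (via Lemma~\ref{lem:short-competitor}-type reasoning) to a lower-level competitor whose box contains $x$. This separation should give $\lambda_1(B_{\wt}(\eps|u|/T),\Gamma_u)\gg1$. I then expect to run the lattice-count argument on the span $W$ of the relevant points, as in Proposition~\ref{prop:exit-count}: the count in each lower-dimensional span is at most $(|v|/|u|)^{\dim W}$ times the reciprocal of bounded-below minima. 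This yields the per-shell bound $\ll(|v|/|u|)^d$ regardless of spanning.

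The step I expect to be delicate is converting minimality into this lower bound on $\lambda_1$ with the correct scale. I would try it first in the form "$\|\gamma\|_{\wt}<c\,\eps|u|/T$ for some $\gamma$ gives $u'$ with $|u'|<|u|$ and $A(x,u')\le\eps/T$", checking this coordinatewise with constants depending only on $d$ and $w_d$, in the spirit of Lemma~\ref{lem:best-cell}.
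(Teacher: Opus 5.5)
Your route is the paper's. You project by $\pi_u$ and use injectivity of $\pi_u$ on the windows $k|u|\le|v|<(k+1)|u|$. Writing $u=(p,q)$, you turn minimality of $u$ into a separation property of $\Lambda_u$: reduce $b$ modulo $q$ to $0\le b\le q/2$, so that $bx_i-a_i=\frac bq(qx_i-p_i)-y_i$ yields a lower-level competitor. You then count in the span $W$ with a full-rank/lower-rank split. The paper normalizes by $\Gamma=D_{T/\eps}\Lambda_u=D_{N/\eps}\Gamma_u$ with $N=T/|u|$. In that normalization your box $k\,B_{\wt}(\eps|u|/T)$ becomes the cube $[-3k,3k]^d$ and the separation reads $\|\gamma\|_\infty\ge\frac12$. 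The paper sums $1/k$ over $\lfloor N\rfloor\le k\le\lfloor N/\eps\rfloor$ rather than over dyadic shells; that difference is cosmetic.

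As written, however, your full-rank (``naive'') estimate is wrong. You coarsen $\frac{|v|}{|u|}B_{\wt}(O(|u|\eps/T))$ to $\frac{|v|}{|u|}B_{\wt}(O(\eps))$ and then claim that $|v|\le T/\eps$ turns $(|v|/|u|)^{d+1}\eps$ into $(|v|/|u|)^d$. In fact it only gives $(|v|/|u|)^{d+1}\eps\le(|v|/|u|)^d\,T/|u|$. This loses the factor $N=T/|u|$, which is unbounded, and a bound with an extra factor $N$ is useless for the recursion $W_{n+1}\le H_\eps W_n$. The lower-rank bound $k^{\dim W}$ has the same problem: it needs the box to be at the scale where you proved separation, namely $\eps|u|/T$, not $\eps$. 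At scale $\eps$ the minima are only $\gg N^{-w_1}$.

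So drop the coarsening. In window $k$ the projected points lie in $2(k+1)B_{\wt}(\eps|u|/T)$. This gives a count $\ll_d k^d\eps/N$ in the full-rank case and $\ll_d k^{d-1}$ otherwise. Since $k\le N/\eps$, the first is also $\le k^{d-1}$. Hence the $\asymp 2^jN$ windows in a shell contribute $\ll_d(2^jN)^d$, which is exactly your target.

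Finally, state the separation as $\lambda_1\bigl(B_{\wt}(\eps|u|/T),\Gamma_u\bigr)\ge\frac12$ for scalar dilations of the box. This is precisely what the coordinatewise argument gives. Routing it through $\|\gamma\|_{\wt}$ introduces a dependence on $w_d$ that the lemma's constant $C_d$ does not have.
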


\begin{proof}
Write $u=(p,q)$ and $N=T/q\ge1$, and consider
$$
 \Gamma=D_{T/\eps}\Lambda_u.
$$
Since $\covol(\Lambda_u)=q^{-1}$, one has
$\covol(\Gamma)=N/\eps$.  Moreover,
\begin{equation}\label{app:eq:separation}
 \|\gamma\|_\infty\ge\frac12
 \qquad(\gamma\in\Gamma\setminus\{0\}).
\end{equation}
Indeed, otherwise there is a nonzero
$y=a-b\widehat u\in\Lambda_u$ with
$|y_i|<\frac12(\eps/T)^{w_i}$ for every $i$.  Reducing $b$ modulo $q$
and changing sign if necessary, assume $0\le b\le q/2$.  If $b=0$,
then $y=a\in\Z^d$ and $|a_i|<1$, a contradiction.  Thus $b>0$, and
for $x\in B_u(T)$,
$$
 bx_i-a_i=\frac bq(qx_i-p_i)-y_i.
$$
Hence $|bx_i-a_i|<(\eps/T)^{w_i}$ for every $i$.  After primitive
reduction, $(a,b)$ gives a vector in $\mathcal Q$ of level less than $q$ whose box at
scale $T$ contains $x$, contradicting the minimality of $u$.

For $k\ge1$, let $W$ be the span of
$\Gamma\cap[-3k,3k]^d$ and put $m=\dim W$.  If $m=0$, the estimate below
is immediate.  If $m=d$, then Lemma~\ref{lem:lattice-count} and
$\covol(\Gamma)=N/\eps$ give
$$
 \#\bigl(\Gamma\cap[-3k,3k]^d\bigr)
 \ll_d \frac{\eps}{N}k^d.
$$
If $1\le m\le d-1$, apply Lemma~\ref{lem:lattice-count} in $W$ to
$3k([-1,1]^d\cap W)$.  By \eqref{app:eq:separation}, every successive
minimum of $[-1,1]^d\cap W$ with respect to $\Gamma\cap W$ is at least
$1/2$, and hence
$$
 \#\bigl(\Gamma\cap[-3k,3k]^d\bigr)
 \ll_d k^m\le k^{d-1}.
$$
Thus
\begin{equation}\label{app:eq:lattice-count}
 \#\bigl(\Gamma\cap[-3k,3k]^d\bigr)
 \ll_d k^{d-1}+\frac{\eps}{N}k^d.
\end{equation}

Now let $v=(p',q')\in\mathcal C_u(T)$ and suppose
$kq\le q'<(k+1)q$.  If
$y\in B_v(T/\eps)\cap B_u(T)$, then for every $i$,
$$
 \bigl|\bigl(D_{T/\eps}\pi_u(v)\bigr)_i\bigr|
 \le \frac{q'}q+\eps^{w_i}<k+2\le3k.
$$
The projection $v\mapsto\pi_u(v)$ is injective on this level interval:
equal projections imply that the two vectors differ by an integer multiple
of the primitive vector $u$, while the interval has width $|u|$.  Therefore
\eqref{app:eq:lattice-count} shows
that the contribution of the interval is at most
$$
 C_dq^{-d}\left(\frac1k+\frac\eps N\right).
$$
Only $\lfloor N\rfloor\le k\le\lfloor N/\eps\rfloor$ can occur.  Since
$N\ge1$,
$$
 \sum_{k=\lfloor N\rfloor}^{\lfloor N/\eps\rfloor}\frac1k
 \le1+\log(2/\eps),
 \qquad
 \frac\eps N\bigl(\lfloor N/\eps\rfloor-\lfloor N\rfloor+1\bigr)
 \le2.
$$
Summing over $k$ proves \eqref{app:eq:count}.
\end{proof}

\subsection{Hausdorff dimension}

\begin{proof}[Alternative proof of the upper bound in Theorem~\ref{thm:singular}]
Set $T_n=\eps^{-n}$.  Fix $n_0\ge0$, and for $n\ge n_0$ define
$$
 F_n=\bigcap_{j=n_0}^nU_\eps(T_j),
 \qquad
 \mathcal A_n=
 \{u\in\mathcal Q:|u|\le T_n,\ F_n\cap B_u(T_n)\ne\varnothing\},
$$
and put $W_n=\sum_{u\in\mathcal A_n}|u|^{-d}$.  Then
$F_n\subseteq\bigcup_{u\in\mathcal A_n}B_u(T_n)$.

If $v\in\mathcal A_{n+1}$ and $|v|\le T_n$, then
$v\in\mathcal A_n$, because $F_{n+1}\subseteq F_n$ and
$B_v(T_{n+1})\subseteq B_v(T_n)$.  For a remaining
$v\in\mathcal A_{n+1}$, choose
$y\in F_{n+1}\cap B_v(T_{n+1})$ and let $u$ be minimal at $(y,T_n)$.
Then $u\in\mathcal A_n$ and $v\in\mathcal C_u(T_n)$.  By
Lemma~\ref{app:lem:count},
$$
 W_{n+1}\le H_\eps W_n,
 \qquad
 H_\eps=1+C_d\bigl(1+\log(1/\eps)\bigr).
$$
Thus $W_n\le W_{n_0}H_\eps^{n-n_0}$.

Put
$$
 \ell_n=\eps^{w_1}T_n^{-1-w_1}.
$$
For $u\in\mathcal A_n$, let
$$
 \widetilde B_u(T_n)=
 \prod_{i=1}^d
 \left[
  \widehat u_i-\frac{(\eps/T_n)^{w_i}}{|u|},
  \widehat u_i+\frac{(\eps/T_n)^{w_i}}{|u|}
 \right].
$$
Then $B_u(T_n)=[0,1]^d\cap\widetilde B_u(T_n)$.
The $i$th half-side of $\widetilde B_u(T_n)$ is
$(\eps/T_n)^{w_i}/|u|$, and
$$
 \frac{(\eps/T_n)^{w_i}}{|u|}
 \ge \eps^{w_i}T_n^{-1-w_i}
 \ge \ell_n.
$$
Thus $\widetilde B_u(T_n)$, and hence $B_u(T_n)$, can be covered by
at most
$$
 C_d\eps T_n^{-1}|u|^{-d}\ell_n^{-d}
$$
cubes of side $\ell_n$.  Consequently,
\begin{align*}
 \mathcal H^s_{2\sqrt d\,\ell_n}(F_n)
 &\ll_{d,s}\eps T_n^{-1}\ell_n^{s-d}W_n \\
 &\ll_{d,s,\eps,n_0}
 \exp\!\left(
 n\left[\log H_\eps
 -(1+w_1)(s-\sstar)\log(1/\eps)\right]
 \right).
\end{align*}
Therefore
$$
 \dim_H\bigcap_{n\ge n_0}F_n
 \le \sstar+
 \frac{\log H_\eps}{(1+w_1)\log(1/\eps)}.
$$
Every point of $\Sing(\wt)\cap[0,1]^d$ belongs to this intersection for
some $n_0$.  Taking the countable union over $n_0$ and then using
integer-translation invariance gives the same upper bound for
$\dim_H\Sing(\wt)$.  Since
$\log H_\eps/\log(1/\eps)\to0$ as $\eps\downarrow0$, it follows that
$\dim_H\Sing(\wt)\le\sstar$.
\end{proof}

\begin{remark}
For fixed $\eps$, the same argument applies to $\DI_{\wt}(\eps)$ and gives
$$
\dim_H\DI_{\wt}(\eps)
\le
\sstar+
\frac{\log H_\eps}{(1+w_1)\log(1/\eps)}.
$$
Since $H_\eps\ll_d 1+\log(1/\eps)$, this yields
$$
\dim_H\DI_{\wt}(\eps)
\le
\sstar+
O_{d,\wt}\left(
\frac{\log\log(1/\eps)}{\log(1/\eps)}
\right)
$$
as $\eps\downarrow0$.  The acceleration in the main argument improves
this to the $O_{d,\wt}(\sqrt\eps)$ estimate of
Theorem~\ref{thm:DI}.
\end{remark}

\paragraph*{Acknowledgments.}
The authors are especially grateful to Yitwah Cheung for his guidance and many insightful discussions throughout this work. B.Y. thanks Taehyeong Kim, Gaurav Aggarwal, Anish Ghosh, Chengyang Wu, Yuming Wei and Han Zhang for helpful discussions. The main idea of the proof grew out of the authors' earlier attempts to understand the main obstruction in this problem. AI tools were used in preparing the manuscript, including simplifying and summarizing some proofs. An AI system also found the simpler proof presented in Appendix A. The authors have checked all proofs and take full responsibility for the paper.

\bigskip
{\small
\noindent\textsuperscript{*}Shanghai Institute for Mathematics and Interdisciplinary Sciences, Shanghai 200433, China.
\par
\noindent\textsuperscript{\#}Qiuzhen College, Tsinghua University, Beijing 100084, China.
}
\end{document}